\theoremstyle{plain}
\newtheorem{thm}{Theorem}[section]
\newtheorem{cor}[thm]{Corollary}
\newtheorem{defn}[thm]{Definition}
\newtheorem{prop}[thm]{Proposition}
\theoremstyle{remark}
\newtheorem*{rmk}{\textbf{Remark}}
\numberwithin{equation}{section}
\newcommand\Ab{\mathbb A}
\newcommand\Ac{\mathcal A}
\newcommand\C{\mathbb{C}}
\newcommand\Cc{\mathcal{C}}
\newcommand\Card{\mathrm{Card}}
\newcommand\Char{\mathds 1}
\newcommand\diam{\mathrm{diam}}
\newcommand\Eb{\mathbb E}
\newcommand\Fc{\mathcal F}
\newcommand\Gb{\mathbb G}
\newcommand\Lip{\mathrm{Lip}}
\newcommand\Mcc{\mathcal{M}}
\newcommand\N{\mathbb{N}}
\newcommand\ol{\overline}
\newcommand\R{\mathbb R}
\newcommand\Sp{\mathrm{Sp}}
\newcommand\T{\mathbb{T}}
\newcommand\ve{\varepsilon}
\newcommand\vk{\varkappa}
\newcommand\Xc{\mathcal{X}}
\newcommand\Z{\mathbb{Z}}
\title{Fractal uncertainty principle for discrete Cantor sets with random alphabets}
\author{Suresh Eswarathasan}
\email{sr766936@dal.ca}
\address{Department of Mathematics and Statistics, Dalhousie University, Halifax, NS B3H1Z9, Canada}
\author{Xiaolong Han}
\email{xiaolong.han@csun.edu}
\address{Department of Mathematics, California State University, Northridge, CA 91330, USA}
\subjclass[2010]{42A38, 60E15, 60G42}
\keywords{Fractal uncertainty principle, discrete Cantor sets, random alphabets, concentration of measure}
\thanks{} 
\begin{document}
\maketitle

\begin{abstract}
In this paper, we investigate the fractal uncertainty principle (FUP) for discrete Cantor sets, which are determined by an alphabet from a base of digits. Consider the base of $M$ digits and the alphabets of cardinality $A$ such that all the corresponding Cantor sets have a fixed dimension $\log A/\log M\in(0,2/3)$. We prove that the FUP with an improved exponent over Dyatlov-Jin \cite{DJ1} holds for almost all alphabets, asymptotically as $M\to\infty$. Our result provides the best possible exponent when the Cantor sets enjoy either the strongest Fourier decay assumption or strongest additive energy assumption. The proof is based on a concentration of measure phenomenon in the space of alphabets.
\end{abstract}

\section{Introduction}\label{sec:intro}
The fractal uncertainty principle (FUP) was recently introduced by Dyatlov-Zahl \cite{DyZa} and has quickly become an emerging topic in Fourier analysis. It concerns the phenomenon that no function can be localized in both position and frequency close to a fractal set. More precisely, the FUP is formulated in the context of estimating the norm
\begin{equation}\label{eq:FUP}
\|\Char_X\Fc_h\Char_Y\|_{L^2(\R^d)\to L^2(\R^d)},
\end{equation}
in which $0<h<1$ is the semiclassical parameter, the $h$-dependent sets $X=X(h),Y=Y(h)\subset\R^d$ are equipped with certain fractal-type structures, and $\Fc_h$ is the semiclassical Fourier transform
$$\Fc_hf(\xi)=\frac{1}{(2\pi h)^\frac d2}\int_{\R^d}e^{-\frac{ix\cdot \xi}{h}}f(x)\,dx.$$
Since $\Fc_h$ is unitary in $L^2(\R^d)$, we always have that $\eqref{eq:FUP}\le1$. Following Dyatlov \cite[Definition 2.1]{Dy}, we say that $X,Y$ satisfy the FUP with exponent $\beta\ge0$ if $\eqref{eq:FUP}=O(h^\beta)$ as $h\to0$. It can be understood as one quantitative version of the uncertainty principle that no function can be localized both near $Y$ in the position space and near $X$ in the frequency space. The main objective about the FUP is to prove the existence of exponent $\beta>0$ and to find the sharp one for which the FUP holds. Here, ``the sharp exponent'', denoted by $\beta^s(X,Y)$, means the largest exponent such that $\eqref{eq:FUP}=O(h^\beta)$ holds for all $0<h<h_0$ with some $h_0>0$. It usually depends on the regularity and dimensions of $X$ and $Y$.

In this paper, via the discrete Fourier transform, we establish a new connection: the FUP and the concentration of measure phenomenon in asymptotic geometric analysis and probability \cite{MS, L}. We follow Dyatlov-Jin \cite{DJ1} and briefly recall the setup of the FUP for discrete Cantor sets. Each discrete Cantor set $\Cc_k(M,\Ac)$ is determined by a \textit{base} $\{0,...,M-1\}$ with $M\in\N$ and $M\ge3$, an \textit{alphabet} $\Ac\subset\{0,...,M-1\}$, and an \textit{order} $k\in\N$:
\begin{equation}\label{eq:Cantor}
\Cc_k(M,\Ac)=\left\{\sum_{j=0}^{k-1}a_jM^j:a_0,\dots,a_{k-1}\in\Ac\right\}.
\end{equation}
It is evident that $\Card(\Cc_k(M,\Ac))=A^k$, where $A=\Card(\Ac)$, the cardinality of $\Ac$.

Let $N\in\N$ and $l^2_N$ be the Hilbert space of functions $u:\{0,...,N-1\}\to\C$ with norm 
$$\|u\|_{l^2_N}^2=\sum_{j=0}^{N-1}|u(j)|^2.$$
Define the (unitary) discrete Fourier transform $\Fc:l^2_N\to l^2_N$
\begin{equation}\label{eq:FN}
\Fc_N u(j)=\frac{1}{\sqrt N}\sum_{l=0}^{N-1}e^{-\frac{2\pi ijl}{N}}u(l).
\end{equation}
For the discrete Cantor sets $\Cc_k=\Cc_k(M,\Ac)$ and $N=M^k$ with $k\in\N$, we consider the FUP \eqref{eq:FUP} in terms of the operator norm 
\begin{equation}\label{eq:rk}
r_k:=\|\Char_{\Cc_k}\Fc_N\Char_{\Cc_k}\|_{l^2_N\to l^2_N}.
\end{equation}
Since $\Fc_N$ is unitary, $\|\Char_{\Cc_k}\Fc_N\Char_{\Cc_k}\|_{l^2_N\to l^2_N}\le1$. Throughout the paper, we denote
\begin{equation}\label{eq:delta}
\delta=\frac{\log A}{\log M}.
\end{equation}
Note that $\delta$ is the dimension of the Cantor set
\begin{equation}\label{eq:Cinfty}
\Cc_\infty=\bigcap_{k=0}^\infty\bigcup_{j\in\Cc_k}\left[\frac{j}{M^k},\frac{j+1}{M^k}\right].
\end{equation}
The FUP for discrete Cantor sets \eqref{eq:rk} in two extreme cases can be trivially derived: 
\begin{enumerate}[(i).]
\item If $\delta=0$ (i.e., $A=1$), then $\Card(\Cc_k(M,\Ac))=1$ so $r_k=N^{-1/2}$ for all $k\in\N$.
\item If $\delta=1$ (i.e., $A=M$), then $\Cc_k(M,\Ac)=\{0,...,N-1\}$ and $\Char_{\Cc_k}\Fc_N\Char_{\Cc_k}=\Fc_N$ so $r_k=1$ for all $k\in\N$.
\end{enumerate} 
We therefore consider the cases when $0<\delta<1$ (i.e., $1<A<M$) in the following discussion. In these cases, the mapping $\Char_{\Cc_k}\Fc_N\Char_{\Cc_k}:l^2_N\to l^2_N$ is given by a symmetric $N\times N$ matrix, which contains exactly $A^k$ non-zero rows and columns (of the form $e^{-2\pi ijl/N}/\sqrt N$). Hence, 
\begin{equation}\label{eq:volbd}
r_k=\left\|\Char_{\Cc_k}\Fc_N\Char_{\Cc_k}\right\|_{l^2_N\to l^2_N}\le\left\|\Char_{\Cc_k}\Fc_N\Char_{\Cc_k}\right\|_{\mathrm{HS}}=\sqrt{\frac{\Card(\Cc_k)^2}{N}}=M^{-k\left(\frac12-\delta\right)}=N^{-\left(\frac12-\delta\right)},
\end{equation}
in which $N=M^k\to\infty$ as $k\to\infty$ and $1/N\to0$ plays the role of the semiclassical parameter $h$ in \eqref{eq:FUP}. Here, $\|\cdot\|_{\mathrm{HS}}$ is the Hilbert-Schmidt norm. 

From \eqref{eq:volbd}, an FUP with exponent $\beta=1/2-\delta>0$ follows if $\delta<1/2$ (i.e., $A=M^\delta<M^{1/2}$). Such estimate is usually referred as ``the volume bound'' because it only takes the size of $\Cc_k$ into consideration. One then asks whether these bounds can be improved, that is,
$$r_k\le N^{-\beta}\quad\text{for some }\beta>\max\left\{0,\frac12-\delta\right\}.$$
The first of such results for all $0<\delta<1$ was proved by Dyatlov-Jin \cite[Theorem 2]{DJ1}:
\begin{thm}\label{thm:DJ}
Let $\Ac\subset\{0,...,M-1\}$ with $M\ge3$ and $1<\Card(\Ac)<M$. Then there exists 
$$\beta=\beta(M,\Ac)>\max\left(0,\frac12-\delta\right)\quad\text{such that}\quad r_k\le N^{-\beta}\quad\text{for all }k\in\N.$$
\end{thm}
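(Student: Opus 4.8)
The plan is to reduce the bound to a single scale, and then treat the regime $\delta\le\frac12$ (where one must improve on the volume bound) and the regime $\delta>\frac12$ (where one needs only positivity) by different arguments. First I would prove submultiplicativity, $r_{k+l}\le r_kr_l$ for all $k,l\in\N$. This rests on the self-similar decomposition $\Cc_{k+l}(M,\Ac)=\Cc_l(M,\Ac)\oplus M^l\Cc_k(M,\Ac)$ together with the Cooley--Tukey factorization of $\Fc_{M^{k+l}}$ through $\Fc_{M^l}\otimes\Fc_{M^k}$, a coordinate permutation, and a unimodular diagonal matrix of twiddle factors; since permutations and unimodular diagonals are isometries, a tensorization of $\Char_{\Cc_{k+l}}\Fc_{M^{k+l}}\Char_{\Cc_{k+l}}$ along these factors gives the inequality. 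Then $k\mapsto\log r_k$ is subadditive, so
\[
\beta_0:=-\lim_{k\to\infty}\frac{\log r_k}{k\log M}=\sup_{k\ge1}\frac{-\log r_k}{k\log M}
\]
exists in $[0,\infty]$, and $r_k\le C_\beta N^{-\beta}$ for all $k$ whenever $\beta<\beta_0$ (the finitely many small scales go into $C_\beta$). Hence it suffices to exhibit one scale $k_0$ at which $r_{k_0}$ is strictly below the trivial bound $M^{-k_0\max(0,1/2-\delta)}$, for then $\beta_0>\max(0,\tfrac12-\delta)$.

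For $\delta\le\frac12$ the trivial bound is the Hilbert--Schmidt (volume) bound of \eqref{eq:volbd}, and I claim it is already strict at $k_0=2$. On $l^2(\Cc_2)$ the operator $\Char_{\Cc_2}\Fc_{M^2}\Char_{\Cc_2}$ equals $M^{-1}G$, where $G=(e^{-2\pi ijl/M^2})_{j,l\in\Cc_2}$ has unimodular entries. If $G$ had rank one, then $e^{-2\pi i(j-j')(l-l')/M^2}=1$ for all $j,j',l,l'\in\Cc_2$; taking $j-j'=l-l'=d$ with $d\ne0$ a difference of two digits of $\Ac$ (such $d$ lies in $\Cc_2-\Cc_2$ because $\Cc_2\supseteq\Ac+Mc$ for any fixed $c\in\Ac$, and $0<d^2<M^2$) would force $M^2\mid d^2$, which is impossible. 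Hence $G$ has at least two positive singular values, so $r_2=M^{-1}\|G\|<M^{-1}\|G\|_{\mathrm{HS}}=M^{-1}A^2=M^{-2(1/2-\delta)}$, and therefore $\beta_0\ge\frac{-\log r_2}{2\log M}>\tfrac12-\delta=\max(0,\tfrac12-\delta)$.

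For $\delta>\frac12$ the trivial bound is only $r_k\le1$, and the task is to produce one $k_0$ with $r_{k_0}<1$, i.e.\ to rule out a nonzero $v$ with $\supp v$ and $\supp\Fc_{M^{k_0}}v$ both contained in $\Cc_{k_0}$. This is genuinely multiscale: $r_1$ can equal $1$ (e.g.\ when $\Ac$ is a coset of a subgroup $H\le\Z/M\Z$ with $|H|^2\mid M$), so no soft one-scale uncertainty principle suffices. The plan is to show that perfect position- and frequency-localization on $\Cc_{k_0}$ forces \emph{near}-localization on $\Cc_{k_0-1}$ with a fixed loss, to iterate this down to the base scale, and to read off a contraction $r_{k_0}\le1-c$ there; at each step the loss arises from the single-scale operator $\Char_\Ac\Fc_M\Char_\Ac$ and from the fact that $\Ac$ omits at least one digit, so the phases $\{e^{-2\pi iab/M}:a,b\in\Ac\}$ cannot all be brought into coherence. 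Making this quantitative is a discrete instance of Dolgopyat's method; alternatively, one rescales $\Cc_{k_0}$ to an Ahlfors--David $\delta$-regular subset of $[0,1]$ with regularity constants independent of $k_0$ and applies the continuous fractal uncertainty principle for regular sets, then transfers the bound back. Either route yields $r_{k_0}\le1-c$ with $c=c(M,\Ac)>0$, hence $\beta_0>0$.

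The $\delta>\frac12$ case is the step I expect to be the main obstacle: the position--frequency localization that must be excluded actually occurs at small scales, so it cannot be ruled out by any soft uncertainty inequality, and the argument genuinely needs the fractal hierarchy together with a nontrivial oscillatory-sum (Dolgopyat-type) estimate reflecting that the alphabet is a proper subset of the base.
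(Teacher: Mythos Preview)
This theorem is not proved in the present paper at all: it is quoted from Dyatlov--Jin \cite[Theorem~2]{DJ1} as background for the random-alphabet results (Theorems~\ref{thm:FUPC} and~\ref{thm:Eb}). So there is no ``paper's own proof'' against which to compare your proposal; the only ingredient the paper actually establishes here is the submultiplicativity (Proposition~\ref{prop:subm}), also taken from \cite{DJ1}.

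On the substance of your outline: the reduction via submultiplicativity is correct, and your $\delta\le\tfrac12$ argument is clean and complete---at scale $k_0=2$ the $A^2\times A^2$ unimodular matrix cannot have rank one because $0<|d|<M$ forces $M^2\nmid d^2$, so the operator norm is strictly below the Hilbert--Schmidt norm and $\beta_0>\tfrac12-\delta$. One caveat: your conclusion is $r_k\le C_\beta N^{-\beta}$, and the constant genuinely cannot be removed (for $M=4$, $\Ac=\{0,2\}$ one checks $r_1=1$), so the inequality ``$r_k\le N^{-\beta}$ for all $k\in\N$'' in the displayed statement should be read with an implicit constant, as in \cite{DJ1}; this is a wrinkle in the paper's formulation, not in your reasoning.

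The $\delta>\tfrac12$ case, however, is a real gap in your proposal. You correctly diagnose that $r_1$ can equal $1$ and that a multiscale argument is needed, but what you write is only a strategy (``iterate near-localization down scales with a fixed loss'', or ``transfer from the continuous FUP for regular sets''), not a proof. Neither route is carried out: the Dolgopyat-type iteration requires identifying and controlling a specific cancellation mechanism at each step, and the transfer from the continuous FUP hides exactly the same difficulty inside the cited black box. Since this is precisely where the content of \cite{DJ1} lies, your proposal as it stands does not prove the theorem in the range $\delta>\tfrac12$.
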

In particular, we define the sharp exponent 
$$\beta^s(M,\Ac):=\sup\left\{\beta\ge0:r_k\le N^{-\beta}\text{ for all }k\in\N\right\}.$$
Then $\beta^s(M,\Ac)>0$ for all discrete Cantor sets. However, the characterization of $\beta^s(M,\Ac)$ and its dependence on $M$ and $\Ac$ is not yet clear. Firstly,
\begin{rmk}[The best possible exponent in the FUP for all discrete Cantor sets]
Take $u(j)=1$ for some $j\in\Cc_k$ and $u(j)=0$ otherwise. Then 
$$\|u\|_{l^2_N}=1\quad\text{and}\quad\|\Char_{\Cc_k}\Fc_N\Char_{\Cc_k}u\|_{l^2_N}=\sqrt{\frac{\Card(\Cc_k)}{N}}=\sqrt{\frac{A^k}{N}}=N^{-\frac{1-\delta}{2}},$$
in which $\Card(\Cc_k)=A^k=M^{\delta k}=N^\delta$. Hence, $r_k\ge N^{-(1-\delta)/2}$ and the best possible exponent $\beta$ in the FUP is $(1-\delta)/2$, that is, for any $M$ and $\Ac$,
\begin{equation}\label{eq:upper}
\beta^s(M,\Ac)\le\frac{1-\delta}{2}.
\end{equation}
\end{rmk}

Dyatlov-Jin \cite[Section 3.5]{DJ1} provided examples of discrete Cantor sets for which the FUP with the best possible exponent \eqref{eq:upper} holds. On the other hand, they also found examples for which 
$$\beta^s(M,\Ac)\le\frac12-\delta+\frac{1}{KM\log M}\quad\text{and}\quad\left|\delta-\frac12\right|\le\frac{1}{K\log M}.$$
Here, $K>0$ is an absolute constant. This sharp exponent improves over the volume bound \eqref{eq:volbd} by a polynomial term in $M$ and is much smaller than the best possible one \eqref{eq:upper} \cite[Proposition 3.17]{DJ1}. These examples painted a complicated picture of the sharp exponents in the FUP for discrete Cantor sets with various bases and alphabets. See Dyatlov-Jin \cite[Figure 3]{DJ1} for the numerical results of $\beta^s(M,\Ac)$ for all alphabets when $3\le M\le10$. Despite the different estimates of $\beta^s(M,\Ac)$ in the FUP for individual alphabets and bases, Dyatlov-Jin \cite[Section 3.5]{DJ1} observed that for fixed $M$ and $A$, the expected $\beta^s(M,\Ac)$ for all alphabets $\Ac$ with $\Card(\Ac)=A$ appears to be much larger than the volume bound \eqref{eq:volbd}, see the solid blue curve in Figure \ref{fig:Eb}. 

The purpose of this paper is to provide a rigorous explanation of the observation in Dyatlov-Jin \cite{DJ1}. To this end, write the space of alphabets as
$$\Ab(M,A):=\left\{\Ac\subset\{0,...,M-1\}:\Card(\Ac)=A\right\},$$
which has cardinality 
$$\Card(\Ab(M,A))={M\choose A}.$$
Each element $\Ac\in\Ab(M,A)$ is an alphabet with cardinality $A$ and defines a collection of discrete Cantor sets $\Cc_k(M,\Ac)$ by \eqref{eq:Cantor}. Set the uniform counting probability measure $\mu$ on $\Ab(M,A)$, that is,
\begin{equation}\label{eq:prob}
\mu(\Omega)=\frac{\Card(\Omega)}{\Card(\Ab(M,A))}\quad\text{for any }\Omega\subset\Ab(M,A).
\end{equation}
Our main theorem states that
\begin{thm}\label{thm:FUPC}
Let $M,A\in\N$ with $M\ge3$ and $\delta=\log A/\log M\in(0,2/3)$. Suppose that $\ve>0$. Then there exists $\Gb=\Gb(M,\ve)\subset\Ab(M,A)$ with
$$\mu\big(\Ab(M,A)\setminus\Gb\big)\le4Me^{-\frac{M^{4\ve}}{64}}$$
such that for all $\Ac\in\Gb$,
$$\beta^s(M,\Ac)\ge\frac12-\frac34\delta-\ve.$$
\end{thm}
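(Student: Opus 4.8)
The plan is to control the random operator norm $r_k = r_k(M,\Ac)$ via a single-order estimate together with a submultiplicativity argument, and to extract the almost-sure statement from a concentration inequality on the space $\Ab(M,A)$ with its uniform measure $\mu$. First I would recall the submultiplicativity of the norms $r_k$ established by Dyatlov-Jin: because the Cantor set $\Cc_{k_1+k_2}$ factors through the base-$M^{k_1}$ structure with the same alphabet, one has $r_{k_1+k_2}\le C r_{k_1} r_{k_2}$ for an absolute (or at worst $M$-independent) constant, so that $\beta^s(M,\Ac)$ is governed by the behavior of $r_k$ for a single, fixed order $k$ — in fact it suffices to get a good bound on $r_1$ or $r_2$, since $\beta^s \ge -\log_N r_k = -\log r_k/(k\log M)$ for any fixed $k$. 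Thus the target $\beta^s \ge \tfrac12 - \tfrac34\delta - \ve$ reduces to showing $r_k \le M^{-k(\frac12 - \frac34\delta - \ve)}$ for some fixed $k$ and all $\Ac$ in a large-measure set $\Gb$.

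Next I would estimate $\Eb_\mu[r_k]$, or more precisely $\Eb_\mu[r_k^2]$ or a higher moment, using the Fourier-analytic/additive-energy structure of $\Char_{\Cc_k}\Fc_N\Char_{\Cc_k}$. The exponent $\tfrac12 - \tfrac34\delta$ is exactly what one expects from interpolating the volume bound $\tfrac12-\delta$ (which controls the Hilbert-Schmidt/$L^2$ behavior) with the additive-energy bound: for a \emph{random} alphabet $\Ac$ of size $A$ in $\{0,\dots,M-1\}$, the additive energy $E(\Ac) = \Card\{(a,b,c,d)\in\Ac^4 : a+b=c+d\}$ is typically $O(A^2)$ rather than the worst case $A^3$, which is the source of the improvement $\delta \mapsto \tfrac34\delta$. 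So the step here is: (a) express (a power of) $r_k$ in terms of sums over the Cantor set that, after taking $\Eb_\mu$, reduce to expected additive-energy-type quantities of the random alphabet; (b) show these expectations are small, giving $\Eb_\mu[r_k] \le M^{-k(\frac12-\frac34\delta)+o(1)}$ as $M\to\infty$, with the $o(1)$ absorbed into the $\ve$.

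The final step is the concentration of measure. I would view $\Ac \mapsto r_k(M,\Ac)$ as a function on the ``slice'' $\Ab(M,A)$ of the discrete cube, or equivalently on a product of symmetric groups / the space of $A$-subsets, and check that it is Lipschitz with a controlled constant with respect to Hamming distance: changing one letter of the alphabet changes $\Char_{\Cc_k}$ by a rank-$O(A^{k-1})$ perturbation in each of finitely many ``slots'', so $|r_k(\Ac) - r_k(\Ac')|$ is bounded when $\Ac,\Ac'$ differ in one coordinate. Applying a bounded-differences / Azuma–McDiarmid inequality (or Maurey-type concentration on the symmetric group) then gives
$$\mu\big(\{\Ac : r_k(M,\Ac) \ge \Eb_\mu[r_k] + t\}\big) \le 2e^{-c t^2 / (\text{Lip const})^2},$$
and choosing $t$ comparable to $M^{-k(\frac12-\frac34\delta-\ve)}$ produces the stated bound $4Me^{-M^{4\ve}/64}$ (the factor $4M$ and the exponent $M^{4\ve}/64$ coming from tracking the Lipschitz constant in terms of $M$ and from a union bound over the finitely many orders $k$ one needs). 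The main obstacle is the second step: getting the expected norm bound $\Eb_\mu[r_k] \lesssim M^{-k(\frac12 - \frac34\delta)+o(1)}$ honestly, since $r_k$ is an \emph{operator} norm rather than a Hilbert-Schmidt norm, so one cannot simply compute $\Eb_\mu[r_k^2]$ entrywise — instead one must pass through a trace/moment method ($\Eb_\mu \operatorname{tr}[(\Char_{\Cc_k}\Fc_N\Char_{\Cc_k}(\Char_{\Cc_k}\Fc_N\Char_{\Cc_k})^*)^m]$ for suitable $m$) and carefully bound the resulting combinatorial sums by expected additive energies of the random alphabet, which is where the restriction $\delta < 2/3$ enters.
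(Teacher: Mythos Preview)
Your overall architecture (reduce to $r_1$ via submultiplicativity, then use concentration on $\Ab(M,A)$) matches the paper's, but the execution of the concentration step has a genuine gap. You propose to apply the bounded-differences inequality directly to the function $\Ac\mapsto r_k(\Ac)$, after first estimating $\Eb_\mu[r_k]$ by a moment/additive-energy computation. The problem is the Lipschitz constant: swapping one letter of $\Ac$ perturbs $\Char_\Ac\Fc_M\Char_\Ac$ by a rank-$O(1)$ matrix whose nonzero rows have $l^2$-norm $\sqrt{A/M}$, so $\|r_1\|_\Lip\asymp M^{(\delta-1)/2}$. Plugging this into the concentration inequality of Theorem~\ref{thm:comA} with deviation $t\asymp M^{-(1/2-3\delta/4-\ve)}$ gives an exponent $t^2/(A\|r_1\|_\Lip^2)\asymp M^{2\ve-\delta/2}$, which tends to $0$ (not $\infty$) whenever $\ve<\delta/4$. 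Passing to higher $k$ does not help: the same computation yields an exponent $\asymp M^{k(2\ve-\delta/2)}$. Thus the concentration bound you obtain is vacuous for small $\ve$, and in particular cannot produce the stated $4Me^{-M^{4\ve}/64}$. The moment computation for $\Eb_\mu[r_k]$, even if carried out, feeds into nothing.

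The paper's fix is to apply concentration not to the operator norm but to each of the $O(M)$ exponential sums $F_m(\Ac)=\sum_{l\in\Ac}e^{2\pi iml/M}$ separately: these are $1$-Lipschitz on $(\Ab(M,A),d)$ with $\Eb(F_m)=0$, so Theorem~\ref{thm:comA} gives $|F_m(\Ac)|\le L\sqrt{A}$ outside a set of measure $2e^{-cL^2}$, and a union bound over $m$ supplies the factor $M$. On the resulting good event one bounds $r_1$ \emph{deterministically} via Schur's lemma applied to $\Mcc(\Ac)\Mcc(\Ac)^\star$, whose off-diagonal entries are exactly $F_{k-j}(\Ac)/M$; this yields $r_1^2\le A/M+LA^{3/2}/M$, and choosing $L\asymp M^{2\ve}$ gives $r_1\le M^{-(1/2-3\delta/4-\ve)}$. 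No expectation of $r_k$ is ever computed, and no additive-energy argument is needed; the restriction $\delta<2/3$ enters simply because $A^{3/2}/M=M^{3\delta/2-1}$ must be small, not from any moment combinatorics.
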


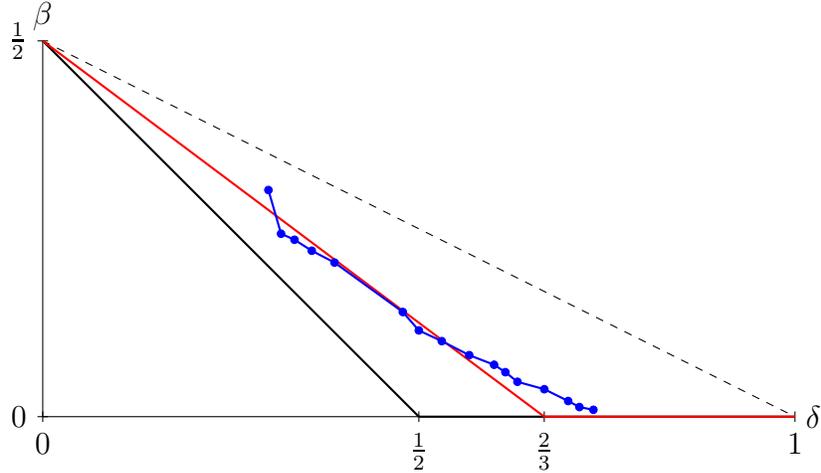
\begin{figure}[h!]
\begin{tikzpicture}[]
\draw[-] (0,0) -- (10,0) node[right] {$\delta$};
\draw[-] (0,0) -- (0,5) node[above] {$\beta$};

\foreach \x/\xtext in {0/0, 5/{\frac12}, {20/3}/{\frac23}, 10/1}
\draw[shift={(\x,0)}] (0pt,2pt) -- (0pt,-2pt) node[below] {$\xtext$};

\foreach \y/\ytext in {0/0, 5/\frac12}
\draw[shift={(0,\y)}] (2pt,0pt) -- (-2pt,0pt) node[left] {$\ytext$};

\draw[dashed] (0,5) -- (10,0);
\draw[thick] (0,5) -- (5,0);
\draw[thick] (5,0) -- (10,0);
\draw[red,thick] (0,5) -- (20/3,0) -- (10,0);

\draw[blue] (3,8.25*5/13.68) circle (0.05cm);
\fill[blue] (3,8.25*5/13.68) circle (0.05cm);

\draw[blue] (86.9/27.43,6.66*5/13.68) circle (0.05cm);
\fill[blue] (86.9/27.43,6.66*5/13.68) circle (0.05cm);

\draw[blue] (91.8/27.43,6.44*5/13.68) circle (0.05cm);
\fill[blue] (91.8/27.43,6.44*5/13.68) circle (0.05cm);

\draw[blue] (98.1/27.43,6.04*5/13.68) circle (0.05cm);
\fill[blue] (98.1/27.43,6.04*5/13.68) circle (0.05cm);

\draw[blue] (106.4/27.43,5.61*5/13.68) circle (0.05cm);
\fill[blue] (106.4/27.43,5.61*5/13.68) circle (0.05cm);

\draw[blue] (131.3/27.43,3.81*5/13.68) circle (0.05cm);
\fill[blue] (131.3/27.43,3.81*5/13.68) circle (0.05cm);

\draw[blue] (5,3.14*5/13.68) circle (0.05cm);
\fill[blue] (5,3.14*5/13.68) circle (0.05cm);

\draw[blue] (145.5/27.43,2.75*5/13.68) circle (0.05cm);
\fill[blue] (145.5/27.43,2.75*5/13.68) circle (0.05cm);

\draw[blue] (155.5/27.43,2.24*5/13.68) circle (0.05cm);
\fill[blue] (155.5/27.43,2.24*5/13.68) circle (0.05cm);

\draw[blue] (6,1.89*5/13.68) circle (0.05cm);
\fill[blue] (6,1.89*5/13.68) circle (0.05cm);

\draw[blue] (168.7/27.43,1.62*5/13.68) circle (0.05cm);
\fill[blue] (168.7/27.43,1.62*5/13.68) circle (0.05cm);

\draw[blue] (173.1/27.43,1.27*5/13.68) circle (0.05cm);
\fill[blue] (173.1/27.43,1.27*5/13.68) circle (0.05cm);

\draw[blue] (182.9/27.43,1*5/13.68) circle (0.05cm);
\fill[blue] (182.9/27.43,1*5/13.68) circle (0.05cm);

\draw[blue] (191.6/27.43,0.57*5/13.68) circle (0.05cm);
\fill[blue] (191.6/27.43,0.57*5/13.68) circle (0.05cm);

\draw[blue] (195.7/27.43,0.35*5/13.68) circle (0.05cm);
\fill[blue] (195.7/27.43,0.35*5/13.68) circle (0.05cm);

\draw[blue] (200.8/27.43,0.25*5/13.68) circle (0.05cm);
\fill[blue] (200.8/27.43,0.25*5/13.68) circle (0.05cm);

\draw[blue,thick] (3,8.25*5/13.68) -- (86.9/27.43,6.66*5/13.68) -- (91.8/27.43,6.44*5/13.68) -- (98.1/27.43,6.04*5/13.68) -- (106.4/27.43,5.61*5/13.68) -- (131.3/27.43,3.81*5/13.68) -- (5,3.14*5/13.68) -- (145.5/27.43,2.75*5/13.68) -- (155.5/27.43,2.24*5/13.68) -- (6,1.89*5/13.68) -- (168.7/27.43,1.62*5/13.68) -- (173.1/27.43,1.27*5/13.68) -- (182.9/27.43,1*5/13.68) -- (191.6/27.43,0.57*5/13.68) -- (195.7/27.43,0.35*5/13.68) -- (200.8/27.43,0.25*5/13.68);
\end{tikzpicture}
\caption{\small{The solid black line is the volume bound $\frac12-\delta$ \eqref{eq:volbd}; the dashed black line is the best possible exponent $\frac{1-\delta}{2}$ \eqref{eq:upper}. The solid red line is $\frac12-\frac34\delta$ in Theorem \ref{thm:Eb}; the solid blue curve is the numerical results of the average of $\beta^s(M,\Ac)$ over all alphabets with given $3\le M\le10$ and $\Card(\Ac)$ (reproduced from Dyatlov-Jin \cite[Figure 3]{DJ1} with permission).}}\label{fig:Eb}
\end{figure}


That is, outside of a set of alphabets with exponentially small probability, 
$$\beta^s(M,\Ac)\ge\frac12-\frac34\delta-\ve$$ 
for discrete Cantor sets with random alphabets $\Ac$ such that $\Card(\Ac)=M^\delta$. Together with the trivial bound\footnote{Here, one could use the volume bound that $\beta^s(M,\Ac)\ge1/2-\delta$ for $\delta\in(0,1/2)$, or the improved bound for $\delta\in(0,1)$ in Theorem \ref{thm:DJ}. It would lead to better estimates of $\Eb(\beta^s(M,\Ac))$ than the one in Theorem \ref{thm:Eb}. For small $M$, such discrepancy (and numerical error) may be responsible for the fluctuation of the numerical results in the blue curve, comparing with the red line, in Figure \ref{fig:Eb}. However, as $M\to\infty$, the discrepancy disappears and the conclusion in Theorem \ref{thm:Eb} stays the same.} that $\beta^s(M,\Ac)\ge0$, we have that for any $\tilde\ve>0$,
$$\Eb(\beta^s(M,\Ac))\ge\left(1-4Me^{-\frac{M^{4\ve}}{64}}\right)\left(\frac12-\frac34\delta-\ve\right)\ge\frac12-\frac34\delta-\tilde\ve,$$ 
if $M$ is sufficiently large. Thus,
\begin{thm}\label{thm:Eb}
Let $M,A\in\N$ with $M\ge3$ and $1<A<M$. Then the expectation of $\beta^s(M,\Ac)$ satisfies that
$$\Eb(\beta^s(M,\Ac))\ge\max\left\{0,\frac12-\frac34\delta+o_M(1)\right\}.$$
\end{thm}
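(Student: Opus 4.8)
The plan is to obtain Theorem~\ref{thm:Eb} as a short corollary of Theorem~\ref{thm:FUPC}: integrate the pointwise bound against the probability measure $\mu$ from \eqref{eq:prob} and then tune the free parameter $\ve$ as a function of $M$. The only extra input is the universal lower bound $\beta^s(M,\Ac)\ge0$, valid for every alphabet because $r_k\le1=N^0$; this is what produces the outer $\max\{0,\cdot\}$ in the statement and what lets us discard the exceptional set of alphabets at no cost.

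Fix $M\ge3$, $1<A<M$, and $\ve>0$, and first suppose $\delta\in(0,2/3)$, so that Theorem~\ref{thm:FUPC} furnishes $\Gb=\Gb(M,\ve)\subset\Ab(M,A)$ with $\mu(\Ab(M,A)\setminus\Gb)\le4Me^{-M^{4\ve}/64}$ and $\beta^s(M,\Ac)\ge\tfrac12-\tfrac34\delta-\ve$ for all $\Ac\in\Gb$. Since $\beta^s\ge0$ everywhere, on $\Gb$ we have $\beta^s\ge\max\{0,\tfrac12-\tfrac34\delta-\ve\}$, whence
$$\Eb\big(\beta^s(M,\Ac)\big)\ \ge\ \int_{\Gb}\beta^s(M,\Ac)\,d\mu\ \ge\ \mu(\Gb)\,\max\Big\{0,\tfrac12-\tfrac34\delta-\ve\Big\}.$$
If $\ve\le\tfrac12-\tfrac34\delta$, then combining $\mu(\Gb)\ge1-4Me^{-M^{4\ve}/64}$ with $0\le\tfrac12-\tfrac34\delta-\ve\le\tfrac12$ gives
$$\Eb\big(\beta^s(M,\Ac)\big)\ \ge\ \Big(1-4Me^{-\frac{M^{4\ve}}{64}}\Big)\Big(\tfrac12-\tfrac34\delta-\ve\Big)\ \ge\ \frac12-\frac34\delta-\ve-2Me^{-\frac{M^{4\ve}}{64}}.$$
If instead $\ve>\tfrac12-\tfrac34\delta$, or if $\delta\ge2/3$ (where Theorem~\ref{thm:FUPC} does not apply but $\tfrac12-\tfrac34\delta\le0$), then the right-hand side above is negative, so the bound follows trivially from $\Eb(\beta^s(M,\Ac))\ge0$. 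Hence, for all $M\ge3$, $1<A<M$, and $\ve>0$,
$$\Eb\big(\beta^s(M,\Ac)\big)\ \ge\ \frac12-\frac34\delta-\ve-2Me^{-\frac{M^{4\ve}}{64}}.$$

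To finish, couple $\ve$ to $M$: with $\ve=\ve_M=(\log\log M)/(\log M)$ one has $M^{4\ve_M}=(\log M)^4\to\infty$, so $2Me^{-M^{4\ve_M}/64}\to0$ while $\ve_M\to0$; thus $\rho(M):=\ve_M+2Me^{-M^{4\ve_M}/64}=o_M(1)$, independently of $A$. Substituting $\ve=\ve_M$ yields $\Eb(\beta^s(M,\Ac))\ge\tfrac12-\tfrac34\delta-\rho(M)$, which together with $\Eb(\beta^s(M,\Ac))\ge0$ is exactly $\Eb(\beta^s(M,\Ac))\ge\max\{0,\tfrac12-\tfrac34\delta+o_M(1)\}$. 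There is no genuine obstacle here beyond Theorem~\ref{thm:FUPC} itself; the one point deserving a line of care is that the pointwise FUP with exponent $\tfrac12-\tfrac34\delta-\ve$ holds only for $\delta<2/3$ and only off an exceptional set, and both restrictions are absorbed costlessly by $\beta^s\ge0$. (As the footnote notes, replacing $\beta^s\ge0$ on the exceptional set by the stronger $\beta^s\ge\max\{0,\tfrac12-\delta\}$, or by Theorem~\ref{thm:DJ}, only affects the finite-$M$ error term and leaves the $M\to\infty$ conclusion unchanged.)
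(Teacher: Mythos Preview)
Your proof is correct and follows the same approach as the paper: derive the expectation bound directly from Theorem~\ref{thm:FUPC} by splitting $\Ab(M,A)$ into $\Gb$ and its complement, using $\beta^s\ge0$ on the complement, and then letting $\ve\to0$ with $M\to\infty$. Your treatment is in fact more explicit than the paper's (which gives only the one-line estimate $\Eb(\beta^s)\ge(1-4Me^{-M^{4\ve}/64})(\tfrac12-\tfrac34\delta-\ve)\ge\tfrac12-\tfrac34\delta-\tilde\ve$ for $M$ large), in that you specify a concrete $\ve_M=(\log\log M)/\log M$ and handle the case $\delta\ge2/3$ separately.
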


The FUP has been addressed by various authors including Dyatlov-Zahl \cite{DyZa}, Dyatlov-Jin \cite{DJ1, DJ3}, Bourgain-Dyatlov \cite{BD1, BD2}, Jin-Zhang \cite{JZ}, Han-Schlag \cite{HS}, Dyatlov-Zworski \cite{DyZw}, and Cladek-Tao \cite{CT}, etc. Among these works, the arguments of Fourier decay from harmonic analysis and additive energy from combinatorics were introduced to establish the FUP \cite{BD1, CT, DJ1, DyZa}; the best possible exponent in the FUP that one can achieve using these arguments is $\frac12-\frac34\delta$ (i.e., when the fractal sets in question enjoy the strongest Fourier decay\footnote{We thank Long Jin for pointing this out to us.} or the strongest additive energy bound). See also Dyatlov \cite[Sections 5.1 and 5.2]{Dy} for an overview. By means of comparison, Theorems \ref{thm:FUPC} and \ref{thm:Eb} imply that the FUP with such exponent holds for almost all alphabets, despite the fact that the strongest Fourier decay and the strongest additive energy bound are both unknown for the corresponding Cantor sets.

The FUP has also found a wide range of applications including the scattering theory in open chaotic systems \cite{DyZa, DJ1, DJ3, BD1, BD2}, the spectral theory in closed chaotic systems (i.e., quantum chaos) \cite{DJ2, DJN, Schw}, partial different equations \cite{J1, J2, W, GZ}, etc. See Dyatlov \cite{Dy} and Dang \cite{Da} for recent surveys on the topic of the FUP. 

We now derive an immediate consequence of the FUP in Theorems \ref{thm:FUPC} and \ref{thm:Eb} for open quantum maps (also called the ``quantized open baker's map''), a popular model for the study of scattering theory in open chaotic systems. Again we follow Dyatlov-Jin's program and refer to their paper \cite{DJ1} for a detailed presentation of the background and related literature.  

For $M,A\in\N$ with $M\ge3$ and $1<A<M$, an open quantum map $B_N$ is defined as
$$B_N=B_{N,\chi}=\Fc^{-1}_N\begin{pmatrix}
\chi_{N/M}\Fc_{N/M}\chi_{N/M} & & \\
 & \ddots & \\
  & & \chi_{N/M}\Fc_{N/M}\chi_{N/M}
\end{pmatrix}.$$
Here, $\Fc_N$ and $\Fc_N^{-1}$ is the discrete Fourier transform \eqref{eq:FN} and its inverse, $N=M^k\to\infty$ as $k\to\infty$, and $\chi$ is a cutoff function.  For intuition, we consider the following elementary example.  Let $M=3$ and $\Ac=\{0,2\}$. Then
$$B_N=\Fc^{-1}_N\begin{pmatrix}
\chi_{N/3}\Fc_{N/3}\chi_{N/3} & 0 & 0\\
0 & 0 & 0\\
0 & 0 & \chi_{N/3}\Fc_{N/3}\chi_{N/3}
\end{pmatrix}.$$
The operator $B_N$ is a quantization of the (classical) open baker's map $\vk_{M,\Ac}:\T^2\to\T^2$ defined as
$$\vk_{M,\Ac}(x,\xi)=\left(Mx-a,\frac{\xi+1}{M}\right)\quad\text{if }(x,\xi)\in\left(\frac aM,\frac{a+1}{M}\right)\times(0,1)\text{ for }a\in\Ac.$$
Formally, $\vk_{M,\Ac}$ sends the points which are not in $(a,a+1)/M\times(0,1)$, $a\in\Ac$, to infinity, and therefore is ``open''. The iterations $\vk_{M,\Ac}^k$, $k\in\Z$, induce a discrete dynamical system in the phase space $\T^2$. This dynamical system is chaotic with trapped set exactly as the Cantor set $\Cc_\infty$ \eqref{eq:Cinfty}, i.e., the points in this set stay in the finite region under the map $\vk_{M,\Ac}^k$ as $k\to\infty$ and $k\to-\infty$.  

The scattering resonances of the open quantum system correspond to the eigenvalues of the matrix $B_N$. The spectrum of $B_N$, denoted by $\Sp(B_N)$, is affected by the fractal structure of the trapped set, $\Cc_\infty$, of the classical dynamics of $\vk_{M,\Ac}$. In particular, the matrix $B_N$ has norm bounded by $1$ and the spectral gap of $B_N$, i.e., the distance between $\Sp(B_N)$ and $1$, is directly provided by the operator norm in \eqref{eq:rk}. (See Dyatlov-Jin \cite[Section 5]{DJ1} for the independence of $\Sp(B_N)$ on the cutoff function $\chi$.) As a consequence of the FUP in Theorem \ref{thm:DJ}, Dyatlov-Jin \cite[Theorem 1]{DJ1} proved that 
\begin{thm}[Spectral gaps of open quantum maps]\label{thm:gapDJ}
Let $\delta\in(0,1)$. Then there exists 
$$\beta=\beta(M,\Ac)>\max\left\{0,\frac12-\delta\right\}$$
such that $B_N$ has a ``spectral gap'' of $\beta$, that is,
$$\limsup_{N\to\infty}\max\{|\lambda|:\lambda\in\Sp(B_N)\}\le M^\beta.$$
\end{thm}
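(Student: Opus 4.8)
\emph{Proof plan.} The plan is to control the spectral radius of $B_N$ by the discrete FUP norm $r_k$ from \eqref{eq:rk} and then invoke Theorem~\ref{thm:DJ}. The first observation is purely linear-algebraic: for a matrix $\max\{|\lambda|:\lambda\in\Sp(B_N)\}$ equals the spectral radius, so by the spectral mapping theorem and $\|\cdot\|\ge\rho(\cdot)$ one has, for every $k\in\N$,
\[
\max\{|\lambda|:\lambda\in\Sp(B_N)\}=\max\{|\mu|:\mu\in\Sp(B_N^k)\}^{1/k}\le\|B_N^k\|_{l^2_N\to l^2_N}^{1/k}.
\]
I would use this with $N=M^k$ and the matched exponent $k=\log_M N$. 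Everything then reduces to proving that, with $C=C(M,\chi)$ independent of $k$,
\[
\|B_{M^k}^k\|_{l^2_N\to l^2_N}\le C\,r_k,\qquad N=M^k.
\]
Granting this, Theorem~\ref{thm:DJ} supplies $\beta=\beta(M,\Ac)>\max\{0,\tfrac12-\delta\}$ with $r_k\le N^{-\beta}=M^{-k\beta}$, hence $\max\{|\lambda|:\lambda\in\Sp(B_N)\}\le C^{1/k}M^{-\beta}\to M^{-\beta}$ as $k\to\infty$; since $\Sp(B_N)$ is independent of the cutoff $\chi$ by Dyatlov--Jin \cite[Section~5]{DJ1}, this is the asserted spectral gap. (The eigenvalue $0$, forced into $\Sp(B_N)$ by the zero blocks when $A<M$, is harmless since $0<M^{-\beta}$.)

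\emph{The iteration estimate.} The content of the displayed bound is the exact ``Egorov property'' of the quantized open baker's map: $B_N$ quantizes $\vk_{M,\Ac}$, the iterate $\vk_{M,\Ac}^k$ expands position by $M^k=N$ and contracts frequency by $N$, and its trapped set at resolution $M^{-k}$ is precisely the set of points whose position and frequency both have their first $k$ base-$M$ digits in $\Ac$---that is, $\Cc_k\times\Cc_k$ in the $(x,\xi)$-picture, where $\Cc_k=\Cc_k(M,\Ac)$. To realize this algebraically I would index $\{0,\dots,N-1\}$ by base-$M$ digit strings and use the standard radix-$M$ factorization of $\Fc_N$ in terms of $\Fc_{N/M}$, $\Fc_M$, a stride permutation, and a diagonal phase; inserting it into $B_N=\Fc_N^{-1}\,\diag\!\big(\chi_{N/M}\Fc_{N/M}\chi_{N/M}\big)$ rewrites a single step of $B_N$ as a $\Cc_1$-structured operator passing from scale $N$ to scale $N/M$. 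Composing $k$ steps collapses the scale to $N/M^k=1$ and reassembles the accumulated permutations and phases into $\Fc_N$, producing a unitary $U$ (a product of permutations and diagonal unitaries) with
\[
U^{*}B_N^kU=\Char_{\Cc_k}\,\Fc_N\,\Char_{\Cc_k}+E_k,
\]
where $E_k$ records the effect of replacing each soft cutoff $\chi$ by the sharp indicator. I would bound $\|E_k\|$ by telescoping over the $k$ steps, the mismatch at each scale being an off-diagonal block whose norm is already of size $r_k$ (or no larger than the volume bound at that scale), which yields $\|B_N^k\|\le C\,r_k$. (If $\chi$ can be chosen so that $E_k=0$, one even gets $\|B_N^k\|=r_k$; in general one may first reduce to such a $\chi$ using the $\chi$-independence of $\Sp(B_N)$.)

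\emph{Main obstacle.} The routine part is the reduction in the first step together with the invocation of Theorem~\ref{thm:DJ}. The hard part will be the iteration estimate: keeping the radix-$M$ bookkeeping honest (so that the twiddle phases really reassemble into $\Fc_N$ rather than a twisted variant), and---more seriously---controlling the cumulative cutoff error $E_k$ uniformly in $k$. The danger is that the $O(1)$ boundary mismatch introduced at each of the $k$ scales compounds; the way out is to observe that the discarded pieces live in off-diagonal blocks whose norms are already governed by $r_k$ at the relevant scale, together with the fact that $\Sp(B_N)$ does not feel the precise shape of $\chi$.
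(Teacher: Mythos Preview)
The present paper does not prove this theorem: it is quoted from Dyatlov--Jin \cite[Theorem~1]{DJ1} as background for Corollary~\ref{cor:gap}, and no argument is supplied here. So there is no in-paper proof to compare your sketch against.

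That said, your outline matches the architecture actually used in \cite{DJ1}: bound the spectral radius by $\|B_N^k\|^{1/k}$ with $N=M^k$, identify $B_N^k$ with $\Char_{\Cc_k}\Fc_N\Char_{\Cc_k}$ up to unitary conjugation, and invoke Theorem~\ref{thm:DJ}. The cleanest execution---and the one carried out in \cite{DJ1}---is the route you mention only at the end: first use the $\chi$-independence of $\Sp(B_N)$ (their Section~5) to reduce to the sharp cutoff $\chi\equiv1$, after which the iteration identity is \emph{exact} (their Lemma~2.5), so $E_k=0$ and one reads off $\rho(B_N)\le r_k^{1/k}\le r_1=M^{-\beta^s(M,\Ac)}$ via Proposition~\ref{prop:subm}. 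Your primary route---keeping a general smooth $\chi$ and bounding the telescoped cutoff error $E_k$ directly---is the wrong order of operations: the assertion that each scale's mismatch is ``already of size $r_k$'' is not substantiated (the soft-to-sharp discrepancy at a single step is an $O(1)$ operator, and there are $k$ of them), and this is precisely the difficulty that the $\chi$-independence reduction is designed to sidestep. So the plan is sound, but you should lead with the reduction to $\chi\equiv1$ rather than treat it as a fallback. Finally, note the typographical slip in the displayed statement: the bound should read $M^{-\beta}$, as you correctly write, not $M^{\beta}$.
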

Following the same line, we have that the following corollary as a consequence of the FUP in Theorems \ref{thm:FUPC} and \ref{thm:Eb}.
\begin{cor}[Spectral gaps of open quantum maps associated with random alphabets]\label{cor:gap}\hfill
\begin{enumerate}[(i).]
\item Let $\delta\in(0,2/3)$ and $\ve>0$. Then there exists $\Gb=\Gb(M,\ve)\subset\Ab(M,A)$ with
$$\mu\big(\Ab(M,A)\setminus\Gb\big)\le4Me^{-\frac{M^{4\ve}}{64}}$$
such that for all $\Ac\in\Gb$, the corresponding open quantum map $B_N$ has a spectral gap of at least 
$$\frac12-\frac34\delta-\ve.$$
\item Let $\delta\in(0,1)$. Then the expectation of the spectral map of the corresponding open quantum maps is at least
$$\max\left\{0,\frac12-\frac34\delta+o_M(1)\right\}.$$
\end{enumerate}
\end{cor}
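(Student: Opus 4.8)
The plan is to feed the fractal uncertainty principle of Theorems \ref{thm:FUPC} and \ref{thm:Eb} into the reduction from the FUP to spectral gaps of open quantum maps established by Dyatlov-Jin \cite[Section 5]{DJ1}, the same reduction that produces Theorem \ref{thm:gapDJ} out of Theorem \ref{thm:DJ}. Recall what that reduction gives: for any $M\ge3$ and any $\Ac\subset\{0,\dots,M-1\}$ with $1<\Card(\Ac)<M$, if $r_k\le N^{-\beta}$ for all $k\in\N$ (with $N=M^k$), then the associated open quantum map $B_N=B_{N,\chi}$ has a spectral gap of at least $\beta$ in the sense of Theorem \ref{thm:gapDJ}, independently of the cutoff $\chi$. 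Since the proof of this implication in \cite{DJ1} uses only the definition of $B_N$ together with the submultiplicative behaviour of the norms $r_k$ along the self-similar Cantor structure --- its only quantitative input about $\Ac$ being the family $\{r_k\le N^{-\beta}\}_k$ --- I expect it to apply verbatim to every alphabet in $\Ab(M,A)$. The whole task then reduces to reading off the needed bounds on $r_k$ from $\beta^s(M,\Ac)$.

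For part (i), I would fix $\ve>0$ and take $\Gb=\Gb(M,\ve)$ to be exactly the set from Theorem \ref{thm:FUPC}, so that $\mu(\Ab(M,A)\setminus\Gb)\le4Me^{-M^{4\ve}/64}$ and $\beta^s(M,\Ac)\ge\frac12-\frac34\delta-\ve$ for $\Ac\in\Gb$. Because $\{\beta\ge0:r_k\le N^{-\beta}\text{ for all }k\}$ is a closed interval $[0,\beta^s(M,\Ac)]$ (closedness by fixing $k$ and passing to the supremum), this lower bound on $\beta^s$ immediately yields $r_k\le N^{-(\frac12-\frac34\delta-\ve)}$ for all $k$, and then the reduction above (applicable since $\delta\in(0,2/3)\subset(0,1)$) gives $B_N$ a spectral gap of at least $\frac12-\frac34\delta-\ve$ for every $\Ac\in\Gb$ --- which is statement (i), with the same exceptional set and the same probability bound.

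For part (ii), I would argue pointwise in $\Ac$: the reduction shows that the spectral gap of $B_N$ is at least $\sup\{\beta\ge0:r_k\le N^{-\beta}\ \forall k\}=\beta^s(M,\Ac)$, and combined with the trivial bound $\|B_N\|\le1$ (which keeps the gap nonnegative) the gap is at least $\max\{0,\beta^s(M,\Ac)\}=\beta^s(M,\Ac)$. Taking expectations over $\Ac\in\Ab(M,A)$ with respect to $\mu$ and invoking Theorem \ref{thm:Eb} then gives $\Eb(\text{spectral gap of }B_N)\ge\Eb(\beta^s(M,\Ac))\ge\max\{0,\frac12-\frac34\delta+o_M(1)\}$, which is (ii).

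I do not anticipate a genuine analytic obstacle here --- all the content is in Theorem \ref{thm:FUPC} and the concentration-of-measure input behind it. The one place that requires care, and which I would check explicitly, is the first step: confirming that the Dyatlov-Jin passage from the FUP to the spectral gap is genuinely alphabet- and cutoff-agnostic, so that substituting the probabilistic FUP for the deterministic FUP of Theorem \ref{thm:gapDJ} costs nothing (in particular that the self-similar/submultiplicative mechanism only ever sees the numbers $r_k$, never the internal combinatorics of $\Ac$). Granting that, Corollary \ref{cor:gap} is a direct translation of Theorems \ref{thm:FUPC} and \ref{thm:Eb}.
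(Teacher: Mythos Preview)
Your proposal is correct and matches the paper's own treatment: the paper does not give a separate proof of Corollary \ref{cor:gap} but simply states that it follows ``following the same line'' from Theorems \ref{thm:FUPC} and \ref{thm:Eb} via the Dyatlov--Jin reduction behind Theorem \ref{thm:gapDJ}. Your write-up supplies exactly this argument, with the added care of noting that $\beta^s(M,\Ac)$ is actually attained (so $r_k\le N^{-\beta^s}$ holds) and that the reduction in \cite{DJ1} depends on $\Ac$ only through the norms $r_k$.
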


\subsection*{Outline of the proof}
We briefly outline the proof of the FUP in Theorem \ref{thm:FUPC} and the organization of the paper. For discrete Cantor sets $\Cc_k$, the FUP concerns the operator norm $r_k=\|\Char_{\Cc_k}\Fc_N\Char_{\Cc_k}\|_{l^2_N\to l^2_N}$ as $k\to\infty$ in \eqref{eq:FUP}. Such estimate is greatly simplified due to the algebraic structure of $\Cc_k$. That is, Dyatlov-Jin \cite[Section 3.1]{DJ1} proved a ``submultiplicativity property'' that $r_{k_1+k_2}\le r_{k_1}r_{k_2}$ for all $k_1,k_2\in\N$ (see Proposition \ref{prop:subm}). Therefore, one reduces the FUP to the estimate of $r_1=\|\Char_{\Ac}\Fc_M\Char_{\Ac}\|_{l^2_M\to l^2_M}$, which is the singular value of the corresponding matrix of $\Char_{\Ac}\Fc_M\Char_{\Ac}$, denoted by $\Mcc(\Ac)$. 

The matrix $\Mcc(\Ac)$ has exactly $A=\Card(\Ac)$ non-zero rows and columns, and non-zero entries of the form $e^{-2\pi ijl/M}/\sqrt M$ for $j,l\in\Ac$, in the view of \eqref{eq:FN}. We estimate $r_1$ via the analysis of $\Mcc(\Ac)\Mcc(\Ac)^\star$, which has non-zero entries of the form
$$\frac1M\sum_{l\in\Ac}e^{\frac{2\pi(k-j)l}{M}}\quad\text{for }k,j\in\Ac.$$
By Schur's lemma (see Dyatlov-Jin \cite[Lemma 3.8]{DJ1} and Dyatlov \cite[Equation (4.11)]{Dy}), we have that
$$r_1^2\le\sup_{j\in\Ac}\sum_{k\in\Ac}\left(\frac1M\sum_{l\in\Ac}e^{\frac{2\pi(k-j)l}{M}}\right).$$
 We are then led to study the exponential sum $\sum_{l\in\Ac}e^{2\pi(k-j)l/M}$ for $k,j\in\Ac$ with $k\ne j$ and its dependence on the random alphabets $\Ac\in\Ab(M,A)$. 

To this end, we build a concentration of measure theory in the space $\Ab(M,A)$, see Section \ref{sec:com}. In short, the theory implies that a Lipschitz function on $\Ab(M,A)$ has values exponentially concentrated around the expectation. Then in Section \ref{sec:FUPC} we apply such theory to the above exponential sum as a function on $\Ab(M,A)$. Indeed, we show that this exponential sum (when $k\ne j$) always obeys a ``square-root cancellation'' so it is bounded by $M^{2\ve}\sqrt A$ for any $\ve>0$ (except a set of alphabets of exponentially small probability). Putting this estimate back to the one for $r_1$, we then have that $r_1^2\le_\ve M^{2\ve}\sqrt A\cdot A/M$ so 
$$r_1\le_\ve M^{-\left(\frac12-\frac34\delta+\ve\right)},$$
noticing that $A=M^\delta$ by \eqref{eq:delta}. Hence, the FUP in Theorem \ref{thm:FUPC} follows.

\section{Concentration of measure in the space of alphabets}\label{sec:com}
The concentration of measure theory in part is concerned with the phenomenon that in some metric spaces of large dimension and equipped with certain probability measure, any Lipschitz function has its values exponentially concentrated around the expectation. See the monographs Milman-Schechtman \cite{MS} and Ledoux \cite{L}. 

In this section, we establish such a theory in the space of alphabets $\Ab(M,A)$. Equip $\Ab(M,A)$ with the uniform counting probability measure $\mu$ \eqref{eq:prob}. Set the metric in $\Ab(M,A)$ by
\begin{equation}\label{eq:dist}
d(\Ac_1,\Ac_2)=\Card\left(\Ac_1\triangle\Ac_2\right)=\Card(\Ac_1\setminus\Ac_2)+\Card(\Ac_2\setminus\Ac_1)\quad\text{for }\Ac_1,\Ac_2\in\Ab(M,A).
\end{equation}
Here, $\Ac_1\triangle\Ac_2$ denotes the symmetric difference.  Now, for any function $F:\Ab(M,A)\to\C$, its Lipschitz norm is
$$\|F\|_\Lip=\max_{\Ac_1,\Ac_2\in\Ab(M,A),\Ac_1\ne\Ac_2}\frac{|F(\Ac_1)-F(\Ac_2)|}{d(\Ac_1,\Ac_2)}.$$
Under this setup, we have that
\begin{thm}[Concentration of measure in the space of alphabets]\label{thm:comA}
Let $M,A\in\N$ with $M\ge3$ and $1<A<M$. Then for any function $F:\Ab(M,A)\to\C$ and $t>0$,
$$\mu\Big(\left\{\Ac\in\Ab(M,A):|F(\Ac)-\Eb(F)|\ge t\right\}\Big)\le2\exp\left(-\frac{t^2}{16A\|F\|^2_\Lip}\right),$$
in which $\Eb(F)$ is the expectation of $F$ with respect to $\mu$.
\end{thm}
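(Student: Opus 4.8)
The plan is to derive the concentration inequality in Theorem \ref{thm:comA} from a bounded-differences (Azuma--Hoeffding / McDiarmid-type) martingale argument adapted to the combinatorial structure of $\Ab(M,A)$. The key observation is that a uniformly random alphabet $\Ac\in\Ab(M,A)$ can be generated by a sequence of dependent but controllable random choices, and that changing any one choice perturbs a Lipschitz function $F$ by a bounded amount. Concretely, I would model the uniform measure on $\Ab(M,A)$ as the law of the set of $A$ positions where $1$'s land in a uniformly random $0$-$1$ string of length $M$ with exactly $A$ ones; equivalently, think of drawing the elements of $\Ac$ without replacement from $\{0,\dots,M-1\}$. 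Let $X_1,\dots,X_A$ be the elements drawn in order (so $\Ac=\{X_1,\dots,X_A\}$ as an unordered set), and let $\mathcal{F}_i=\sigma(X_1,\dots,X_i)$ be the natural filtration. Set $Z_i=\Eb(F(\Ac)\mid\mathcal{F}_i)$, a Doob martingale with $Z_0=\Eb(F)$ and $Z_A=F(\Ac)$.

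The heart of the matter is to bound the martingale increments $|Z_i-Z_{i-1}|$. Here I would use the standard coupling/exchange argument: conditioning on $X_1,\dots,X_{i-1}$, two possible values $x,x'$ for $X_i$ lead to conditional laws of the remaining alphabet that can be coupled so that the two completed alphabets differ in at most two elements (swap $x$ for $x'$ among the later draws when a conflict arises, which is a measure-preserving bijection on the space of completions). Since $d(\Ac,\Ac')\le 2$ for coupled alphabets differing by such a swap, $|F(\Ac)-F(\Ac')|\le 2\|F\|_\Lip$, and averaging gives $|Z_i-Z_{i-1}|\le c_i$ with each $c_i\lesssim \|F\|_\Lip$ (a small universal constant, e.g.\ $2$ or $4$, absorbing the without-replacement bookkeeping). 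Azuma--Hoeffding then yields, for the real and imaginary parts of $F$ separately,
$$\mu\big(|F(\Ac)-\Eb(F)|\ge t\big)\le 2\exp\left(-\frac{t^2}{2\sum_{i=1}^A c_i^2}\right)\le 2\exp\left(-\frac{t^2}{16A\|F\|_\Lip^2}\right),$$
after choosing the constants so that $\sum c_i^2\le 8A\|F\|_\Lip^2$ and splitting the $\C$-valued deviation into two real ones (each contributing a factor that is absorbed into the constant $16$).

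The step I expect to be the main obstacle is making the coupling of completions rigorous while tracking constants: one must verify carefully that, conditioned on the first $i-1$ draws and on two distinct choices for the $i$-th draw, there is a measure-preserving bijection between the two families of completions under which the symmetric difference of the resulting alphabets is at most $2$ almost surely — the subtlety is the case where $x'$ (the alternative $i$-th element) is itself later drawn under the $x$-branch, which forces a single additional transposition. An alternative that sidesteps coupling entirely is to invoke a ready-made concentration inequality for the uniform measure on the slice $\{0,1\}^M_A$ of the discrete cube (e.g.\ via the known log-Sobolev or transportation-cost inequality for the symmetric group, pushed forward along $S_M\to\Ab(M,A)$, or McDiarmid's inequality for sampling without replacement); I would state the martingale argument as the primary proof and remark that such black-box inputs give the same bound up to the universal constant, which is all Theorem \ref{thm:FUPC} requires.
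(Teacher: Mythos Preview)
Your proposal is correct and is essentially the same argument as the paper's, just in more concrete packaging. The paper works on the permutation space $\Pi(M,A)$ (ordered $A$-tuples of distinct elements of $\{0,\dots,M-1\}$---exactly your sampling-without-replacement model), invokes the Milman--Schechtman ``length'' formalism (Definition~\ref{defn:length} and Theorem~\ref{thm:com}, which is an abstraction of Azuma--Hoeffding with a coupling bound on increments) to show the length is at most $2\sqrt{A}$ via the transposition bijection $\phi=\tau\circ\pi$ you describe, and then pushes the resulting inequality forward along the forgetful map $\pi\mapsto\mathrm{Im}(\pi)$ to $\Ab(M,A)$; your swap coupling and bound $c_i\le 2\|F\|_\Lip$ are exactly the paper's $\tau$ and $a_k=2$, and the alternative you mention---pushing forward from permutations---is precisely the route the paper takes.
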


Theorem \ref{thm:comA} is largely inspired by Maurey \cite{Ma} and Schechtman \cite{Sche}, in which they studied the concentration of measure theory in the space of permutations. See also McDiarmid \cite{Mc} and Talagrand \cite{Tal} for a more general theory on symmetric groups. 

\begin{rmk}
A similar result to Theorem \ref{thm:comA} was obtained by Greenhill-Isaev-Kwan-McKay \cite[Section 2.2]{GIKM} using a different argument. We thank Jie Ma for this reference.
\end{rmk}

In Subsection \ref{sec:comM}, we quote a relevant concentration of measure theorem for finite metric spaces as presented in Milman-Schechtman \cite[Section 7]{MS} and Ledoux \cite[Section 4.1]{L}. In Subsection \ref{sec:comP}, we apply such a theorem to the space of permutations and then in Subsection \ref{sec:comA}, we deduce our own based on the one for the permutations.

\subsection{Finite metric spaces}\label{sec:comM}
We first need a concept of ``length'' of finite metric spaces, as described in Milman-Schechtman \cite[Section 7.7]{MS}.
\begin{defn}[Lengths of finite metric spaces]\label{defn:length}
Let $(X,d)$ be a finite metric space. We say that $(X,d)$ is of length at most $l$ if there exist positive numbers $a_1,...,a_n$ with 
\begin{equation}\label{eq:length}
l=\left(\sum_{k=1}^n|a_k|^2\right)^{\frac12}
\end{equation}
and a sequence $\{\Xc^k\}_{k=0}^n$, $\Xc^k=\{\Omega^k_j\}_{j=1}^{m_k}$, of partitions of $X$ with the following properties.
\begin{enumerate}[(i).]
\item $m_0=1$, i.e., $\Xc^0=\{X\}$,
\item $m_n=\Card(X)$, i.e., $\Xc^n=\{\{x\}:x\in X\}$,
\item $\Xc^k$ is a refinement of $\Xc^{k-1}$ for all $k=1,...,n$,
\item for all $k=1,...,n$, $r=1,...,m_{k-1}$ and $p,q$ such that $\Omega^k_p,\Omega^k_q\subset\Omega^{k-1}_r$, there exists a bijection $\phi:\Omega^k_p\to\Omega^k_q$ with $d(x,\phi(x))\le a_k$ for all $x\in\Omega^k_p$.
\end{enumerate}
\end{defn}

\begin{rmk}
Taking the trivial sequence of partitions $\Xc^0=\{X\}$ and $\Xc^1=\{\{x\}:x\in X\}$, we see that the above conditions hold with 
$$l=\diam(X):=\max_{x,y\in X}d(x,y).$$
It then follows that the length of a finite metric space is at most $\diam(X)$.
\end{rmk}

Let $F:X\to\C$. We remind the reader that the Lipschitz norm of $F$ is
$$\|F\|_\Lip=\max_{x_1,x_2\in X,x_1\ne x_2}\frac{|F(x_1)-F(x_2)|}{d(x_1,x_2)}.$$
Under these notations, as taken from Milman-Schechtman \cite[Section 7.8]{MS}, the concentration of measure phenomenon states that 
\begin{thm}[Concentration of measure in finite metric spaces ]\label{thm:com}
Let $(X,d)$ be a finite metric space of length at most $l$. Suppose that $\mu$ is the uniform counting probability measure on $X$. Then for any function $F:X\to\C$ and $t>0$,
$$\mu\Big(\{x\in X:|F(x)-\Eb(F)|\ge t\}\Big)\le2\exp\left(-\frac{t^2}{4l^2\|F\|^2_\Lip}\right),$$
in which $\Eb(F)$ is the expectation of $F$ with respect to $\mu$.
\end{thm}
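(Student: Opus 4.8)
The plan is to prove Theorem \ref{thm:com} by the classical martingale route: build a Doob martingale adapted to the filtration coming from the nested partitions witnessing the length bound, control its increments using condition (iv) of Definition \ref{defn:length}, and then conclude with the Azuma--Hoeffding inequality. First I would fix a sequence of partitions $\{\Xc^k\}_{k=0}^n$, $\Xc^k=\{\Omega^k_j\}_{j=1}^{m_k}$, and positive numbers $a_1,\dots,a_n$ realizing the length, so that $\sum_{k=1}^n a_k^2=l^2$ and properties (i)--(iv) hold. Let $\Fc_k$ be the $\sigma$-algebra on $X$ generated by $\Xc^k$; by (i)--(iii) these form an increasing filtration with $\Fc_0=\{\emptyset,X\}$ and $\Fc_n=2^X$. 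Set the Doob martingale $F_k:=\Eb(F\mid\Fc_k)$, so $F_0=\Eb(F)$ and $F_n=F$, and note that since $\mu$ is the uniform counting measure, $F_k$ is constant on each $\Omega^k_p$ with value the plain average $\tfrac{1}{\Card(\Omega^k_p)}\sum_{x\in\Omega^k_p}F(x)$.

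The heart of the argument is the increment bound $|F_k(x)-F_{k-1}(x)|\le a_k\|F\|_\Lip$ for every $x\in X$ and every $k$. To get it I would first observe that condition (iv) forces all children $\Omega^k_q\subset\Omega^{k-1}_r$ of a common parent to have the same cardinality, since a bijection exists between any two of them; consequently $F_{k-1}$ on $\Omega^{k-1}_r$ is the unweighted average over the sibling indices $q$ of the constants $F_k|_{\Omega^k_q}$. Hence $F_k-F_{k-1}$ on a child $\Omega^k_p$ is an average over $q$ of the differences $F_k|_{\Omega^k_p}-F_k|_{\Omega^k_q}$. For each such pair, taking the bijection $\phi:\Omega^k_p\to\Omega^k_q$ from (iv) with $d(z,\phi(z))\le a_k$ and re-indexing the sum defining $F_k|_{\Omega^k_q}$ through $\phi$ (legitimate because the two pieces have equal cardinality) turns that difference into $\tfrac{1}{\Card(\Omega^k_p)}\sum_z\bigl(F(z)-F(\phi(z))\bigr)$, each summand being at most $\|F\|_\Lip\, d(z,\phi(z))\le a_k\|F\|_\Lip$ in modulus. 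Averaging over $q$ gives the claim.

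With $\|F_k-F_{k-1}\|_\infty\le a_k\|F\|_\Lip$ in hand, the proof finishes by Azuma--Hoeffding applied to the martingale $F_0,\dots,F_n$: for real-valued $F$ this gives $\mu(|F-\Eb F|\ge t)\le 2\exp\bigl(-t^2/(2\|F\|_\Lip^2\sum_k a_k^2)\bigr)=2\exp\bigl(-t^2/(2l^2\|F\|_\Lip^2)\bigr)$, which already implies the stated bound. For complex-valued $F$ I would apply this separately to $\mathrm{Re}\,F$ and $\mathrm{Im}\,F$ --- each has Lipschitz norm $\le\|F\|_\Lip$ and the same increment bound --- at level $t/\sqrt2$, note that $|F-\Eb F|\ge t$ forces one of $|\mathrm{Re}\,F-\Eb\,\mathrm{Re}\,F|$, $|\mathrm{Im}\,F-\Eb\,\mathrm{Im}\,F|$ to be at least $t/\sqrt2$, and combine by a union bound, absorbing the resulting constants into the stated form $2\exp\bigl(-t^2/(4l^2\|F\|_\Lip^2)\bigr)$.

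The step I expect to need the most care is the increment bound: the equal-cardinality-of-siblings observation is easy but essential, because without it the conditional expectation $F_{k-1}$ would be a \emph{weighted} average of the $F_k|_{\Omega^k_q}$ and the telescoping would not reduce cleanly to the Lipschitz estimate along $\phi$. The remaining bookkeeping --- verifying the filtration properties and tracking the numerical constants through the complex-valued reduction --- is routine, with any slack only matching or improving the stated exponent.
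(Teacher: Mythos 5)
The paper offers no proof of Theorem \ref{thm:com}: it is quoted from Milman--Schechtman \cite{MS}, and your martingale argument is exactly the standard proof behind that citation. The substance is correct: the filtration generated by the refining partitions, the observation that condition (iv) of Definition \ref{defn:length} forces siblings to have equal cardinality (so that the conditional expectation on a parent is the unweighted average over its children), the re-indexing of $F_k|_{\Omega^k_q}$ through the bijection $\phi$ to obtain $\|F_k-F_{k-1}\|_\infty\le a_k\|F\|_\Lip$, and Azuma--Hoeffding with $\sum_k a_k^2=l^2$ all go through as you describe.

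The only point to tighten is the final reduction for complex-valued $F$. The union bound over real and imaginary parts at level $t/\sqrt2$ gives $4\exp\bigl(-t^2/(4l^2\|F\|_\Lip^2)\bigr)$, and a factor of $4$ cannot simply be ``absorbed'' into the stated leading factor of $2$: as written you prove the theorem with constant $4$ in front, or equivalently (trading the prefactor for the exponent, using that the bound is vacuous when $2e^{-t^2/(8l^2\|F\|_\Lip^2)}>1$) in the form $2\exp\bigl(-t^2/(8l^2\|F\|_\Lip^2)\bigr)$. This is immaterial for every application of Theorem \ref{thm:com} in the paper, which only needs an exponential tail of this shape, but your last sentence as literally stated rests on the false inequality $4e^{-x}\le 2e^{-x}$, so the constants should be reported honestly.
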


\subsection{From metric spaces to spaces of permutations}\label{sec:comP}
Let $\Pi(M,A)$ be the space of permutations of $A$ elements from $\{0,...,M-1\}$, that is, each $\pi\in\Pi(M,A)$ is an injective mapping $\pi:\{0,...,A-1\}\to\{0,...,M-1\}$. Equip $\Pi(M,A)$ with the metric
$$d^p(\pi_1,\pi_2)=\Card\{j=0,...,A-1:\pi_1(j)\ne\pi_2(j)\}\quad\text{for }\pi_1,\pi_2\in\Pi(M,A).$$
Here and thereafter, we use the superscript $p$ to indicate the objects associated with permutations. Equip $\Pi(M,A)$ with the uniform counting probability measure $\mu^p$, i.e.,
$$\mu^p(\Omega)=\frac{\Card(\Omega)}{\Card(\Pi(M,A))}=\frac{(M-A)!}{M!}\cdot\Card(\Omega)\quad\text{for any }\Omega\subset\Pi(M,A).$$
Then we have that
\begin{thm}[Concentration of measure in the space of permutations]\label{thm:comP}
Suppose that $M,A\in\N$ with $M\ge3$ and $1<A<M$. Then for any function $F:\Pi(M,A)\to\C$ and $t>0$,
$$\mu^p\Big(\{\pi\in\Pi(M,A):|F(\pi)-\Eb^p(F)|\ge t\}\Big)\le2\exp\left(-\frac{t^2}{16A\|F\|^2_\Lip}\right),$$
in which $\Eb^p(F)$ is the expectation of $F$ with respect to $\mu^p$.
\end{thm}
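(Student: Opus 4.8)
The plan is to deduce Theorem~\ref{thm:comP} directly from the finite metric space concentration bound in Theorem~\ref{thm:com}, applied to $X=\Pi(M,A)$ with the metric $d^p$ and the uniform counting probability measure $\mu^p$. Since Theorem~\ref{thm:com} produces the exponent $t^2/\bigl(4l^2\|F\|_\Lip^2\bigr)$ for a space of length at most $l$, it suffices to prove that $(\Pi(M,A),d^p)$ has length at most $l=2\sqrt A$; then $4l^2=16A$ and the two exponents agree.

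To bound the length I would use the natural ``reveal the values one coordinate at a time'' filtration. Set $n=A$ and, for $k=0,1,\dots,A$, let $\Xc^k$ be the partition of $\Pi(M,A)$ in which two permutations lie in the same block precisely when they agree at the points $0,1,\dots,k-1$. Then $\Xc^0=\{\Pi(M,A)\}$, the partition $\Xc^A$ consists of singletons, and each $\Xc^k$ refines $\Xc^{k-1}$, so conditions (i)--(iii) of Definition~\ref{defn:length} are immediate. It remains to verify (iv) with $a_k=2$ for every $k=1,\dots,A$.

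Fix $k$, a block $\Omega^{k-1}_r$ of $\Xc^{k-1}$, and two sub-blocks $\Omega^k_p,\Omega^k_q\subset\Omega^{k-1}_r$. Every $\pi\in\Omega^k_p\cup\Omega^k_q$ restricts to one common partial injection on $\{0,\dots,k-2\}$, while $\pi(k-1)=a$ on $\Omega^k_p$ and $\pi(k-1)=b$ on $\Omega^k_q$ for fixed $a\ne b$ that do not occur among the values at $0,\dots,k-2$ (by injectivity). Define $\phi:\Omega^k_p\to\Omega^k_q$ by exchanging $a$ and $b$: given $\pi\in\Omega^k_p$, if $b=\pi(j)$ for the unique $j\in\{k,\dots,A-1\}$ with this property, put $\phi(\pi)(k-1)=b$, $\phi(\pi)(j)=a$, and $\phi(\pi)(i)=\pi(i)$ otherwise; if $b$ does not lie in the image of $\pi$, put $\phi(\pi)(k-1)=b$ and leave $\phi(\pi)(i)=\pi(i)$ for $i\ne k-1$. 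In both cases $\phi(\pi)$ is injective, agrees with the common partial injection, and sends $k-1$ to $b$, so $\phi(\pi)\in\Omega^k_q$ and $d^p(\pi,\phi(\pi))\le2$. The same recipe with the roles of $a$ and $b$ swapped gives a map $\Omega^k_q\to\Omega^k_p$ that is inverse to $\phi$, so $\phi$ is a bijection. This establishes (iv) with $a_k=2$, whence the length is at most $\bigl(\sum_{k=1}^{A}2^2\bigr)^{1/2}=2\sqrt A$. Feeding $l=2\sqrt A$ into Theorem~\ref{thm:com} yields exactly the bound $2\exp\bigl(-t^2/(16A\|F\|_\Lip^2)\bigr)$, completing the argument.

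The only point requiring care is the verification of property (iv): one must confirm that the exchange map $\phi$ always lands in $\Omega^k_q$, that the displacement never exceeds $2$ for \emph{any} $\pi$ (it equals $1$ when $b$ is unused and $2$ when the transposition is needed, so $a_k=2$ is both forced and sufficient), and that swapping $a$ and $b$ back is a genuine two-sided inverse. Each of these follows at once from the injectivity of permutations, but together they are the heart of the proof; everything else is bookkeeping about the filtration and a substitution into Theorem~\ref{thm:com}.
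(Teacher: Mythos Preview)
Your proposal is correct and follows essentially the same route as the paper: both apply Theorem~\ref{thm:com} to $\Pi(M,A)$ via the ``reveal one coordinate at a time'' filtration and establish $a_k=2$ by composing with the transposition of the two target values. Your case analysis (whether $b$ lies in the image of $\pi$ or not) is in fact a more explicit unpacking of the paper's one-line construction $\phi(\pi)=\tau\circ\pi$ with $\tau$ the transposition swapping the two values; the resulting bijection and displacement bound are identical.
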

\begin{proof}[Proof of Theorem \ref{thm:comP}]
To apply the concentration of measure theory in finite metric spaces, Theorem \ref{thm:com}, we need to estimate the length of $\Pi(M,A)$. To this end, we construct a sequence $\{\Xc^k\}_{k=0}^A$ of partitions as follows. For $k=0$, assign $\Xc^0=\Pi(M,A)$. For $k=1,...,A$, write
$$\Xc^k=\left\{\Omega_{j_0\cdots j_{k-1}}:j_0,...,j_{k-1}\text{ are distinct in }\{0,...,M-1\}\right\},$$
in which
$$\Omega_{j_0\cdots j_{k-1}}=\{\pi\in\Pi(M,A):\pi(0)=j_0,...,\pi(k-1)=j_{k-1}\}.$$
That is, $\Omega_{j_0\cdots j_{k-1}}$ is the collection of permutations such that the first $k$ elements are mapped to $j_0,...,j_{k-1}$. 

In Definition \ref{defn:length}, Conditions (i), (ii), and (iii) are clearly valid. For Condition (iv), choose 
$\Omega_{j_0\cdots j_{k-1}r},\Omega_{j_0\cdots j_{k-1}s}\subset\Omega_{j_0\cdots j_{k-1}}$. Let $\tau$ be the transposition that switches $r$ with $s$. Then $\pi(\{0,...,A-1\})$ and $\tau\circ\pi(\{0,...,A-1\})$ differ by exactly two elements. Define $\phi=\tau\circ\pi:\Omega_{j_0\cdots j_{k-1}r}\to\Omega_{j_0\cdots j_{k-1}s}$. It follows that
$$d^p(\pi,\phi(\pi))\le2.$$
Thus, we can take $a_k=2$ for all $k=1,...,A$. The length of $\Pi(M,A)$ is then computed by \eqref{eq:length} and is at most
$$\left(\sum_{k=1}^A|a_k|^2\right)^\frac12=2A^\frac12.$$
The proof is complete after applying Theorem \ref{thm:com}.
\end{proof}

\subsection{From permutations to alphabets}\label{sec:comA}
In this section, we establish our relevant concentration of measure theorem in the space of alphabets $\Ab(M,A)$ (Theorem \ref{thm:comA}) through a slight detour. That is, we build upon the corresponding theorem in the space of permutations $\Pi(M,A)$, which in turn depends on the length estimate of $\Pi(M,A)$ (Definition \ref{defn:length}). 

We adapt the concentration of measure result for permutations given in Theorem \ref{thm:comP} to that for alphabets in Theorem \ref{thm:comP}. 

Recall that any permutation $\pi\in\Pi(M,A)$ is an injective mapping from $\{0,...,A-1\}$ to $\{0,...,M-1\}$. Define $P:\Pi(M,A)\to\Ab(M,A)$ by $P(\pi)=\mathrm{Im}(\pi)$, the image of $\pi$, for $\pi\in\Pi(M,A)$. (That is, the mapping $P$ removes the order in the permutation.) Then $P$ is surjective and $\Card(P^{-1}(\Ac))=A!$ for any $\Ac\in\Ab(M,A)$. 

Let $F:\Ab(M,A)\to\C$. Then $F$ naturally induces a function $F^p:\Pi(M,A)\to\C$ by $F^p=F\circ P$. For the concentration of measure theory, we need to compare the expectation (with respect to different probability measures $\mu$ and $\mu^p$) and Lipschitz norms (with respect to different metrics $d$ and $d^p$) of $F$ and $F^p$:

$\bullet$ Expectation:
\begin{eqnarray*}
\Eb^p(F^p)&=&\frac{1}{\Card(\Pi(M,A))}\sum_{\pi\in\Pi(M,A)}F^p(\pi)\\
&=&\frac{(M-A)!}{M!}\sum_{\pi\in\Pi(M,A)}F\circ P(\pi)\\
&=&\frac{(M-A)!}{M!}\sum_{\Ac\in\Ab(M,A)}\sum_{P(\pi)=\Ac}F(\Ac)\\
&=&\frac{(M-A)!}{M!}\sum_{\Ac\in\Ab(M,A)}A!\cdot F(\Ac)\\
&=&\frac{1}{\Card(\Ab(M,A))}\sum_{\Ac\in\Ab(M,A)}F(\Ac)\\
&=&\Eb(F).
\end{eqnarray*}

$\bullet$ Lipschitz norm: Notice that if $P(\pi_1)=P(\pi_2)$, then 
$$F^p(\pi_1)=F(P(\pi_1))=F(P(\pi_2))=F^p(\pi_2).$$
Hence, it suffices to consider the case when $P(\pi_1)\ne P(\pi_2)$. In this case,
$$d^p(\pi_1,\pi_2)\ge d(P(\pi_1),P(\pi_2)).$$
It is based on a simple observation that if the combinations $P(\pi_1)$ and $P(\pi_2)$ satisfy that 
$$d(P(\pi_1),P(\pi_2))=\Card(P(\pi_1)\triangle P(\pi_2))=k,$$
then 
$$d^p(\pi_1,\pi_2)=\Card\{j=0,...,A-1:\pi_1(j)\ne\pi_2(j)\}\}\ge k.$$
Therefore,
\begin{eqnarray*}
\|F^p\|_\Lip&=&\max_{\pi_1,\pi_2\in\Pi(M,A),P(\pi_1)\ne P(\pi_2)}\frac{|F^p(\pi_1)-F^p(\pi_2)|}{d^p(\pi_1,\pi_2)}\\
&\le&\max_{\pi_1,\pi_2\in\Pi(M,A),P(\pi_1)\ne P(\pi_2)}\frac{|F(P(\pi_1))-F(P(\pi_2))|}{d(P(\pi_1),P(\pi_2))}\\
&=&\max_{\Ac_1,\Ac_2\in\Ab(M,A),\Ac_1\ne\Ac_2}\frac{|F(\Ac_1)-F(\Ac_2)|}{d(\Ac_1,\Ac_2)}\\
&=&\|F\|_\Lip.
\end{eqnarray*}
With these facts, we now prove Theorem \ref{thm:comA}:
\begin{proof}[Proof of Theorem \ref{thm:comA}]
Let $F:\Ab(M,A)\to\C$ and $t>0$. With $F^p=F\circ P:\Pi(M,A)\to\C$, we have that $\Eb^p(F^p)=\Eb(F)$ as shown above. Thus,   
$$\left|F^p(\pi)-\Eb^p(F^p)\right|\ge t$$
if and only if $P(\pi)=\Ac$ for some $\Ac\in\Ab(M,A)$ such that
$$\left|F(\Ac)-\Eb(F)\right|\ge t.$$
Hence,
\begin{eqnarray*}
&&\mu^p\Big(\{\pi\in\Pi(M,A):|F^p(\pi)-\Eb^p(F^p)|\ge t\}\Big)\\
&=&\frac{1}{\Card(\Pi(M,A))}\cdot\Card\left(\{\pi\in\Pi(M,A):|F^p(\pi)-\Eb^p(F^p)|\ge t\}\right)\\
&=&\frac{(M-A)!}{M!}\cdot\Card\left(\{\pi\in\Pi(M,A):|F(P(\pi))-\Eb(F)|\ge t\}\right)\\
&=&\frac{(M-A)!}{M!}\cdot A!\cdot\Card\left(\{\Ac\in\Ab(M,A):|F(\Ac)-\Eb(F)|\ge t\}\right)\\
&=&\frac{1}{\Card(\Ab(M,A))}\cdot\Card\left(\{\Ac\in\Ab(M,A):|F(\Ac)-\Eb(F)|\ge t\}\right)\\
&=&\mu\Big(\{\Ac\in\Ab(M,A):|F(\Ac)-\Eb(F)|\ge t\}\Big).
\end{eqnarray*}
By Theorem \ref{thm:comP} and that $\|F^p\|_\Lip\le\|F\|_\Lip$ as shown above,
\begin{eqnarray*}
&&\mu\Big(\{\Ac\in\Ab(M,A):|F(\Ac)-\Eb(F)|\ge t\}\Big)\\
&=&\mu^p\Big(\{\pi\in\Pi(M,A):|F^p(\pi)-\Eb^p(F^p)|\ge t\}\Big)\\
&\le&2\exp\left(-\frac{t^2}{16A\|F^p\|^2_\Lip}\right)\\
&\le&2\exp\left(-\frac{t^2}{16A\|F\|^2_\Lip}\right),
\end{eqnarray*}
which is Theorem \ref{thm:comA}.
\end{proof}

\begin{rmk}
It would be interesting to derive an estimate of the length of $\Ab(M,A)$, which is expected to be at most $O(\sqrt A)$, as indicated in Theorem \ref{thm:com}. Such estimate leads to a direct proof of Theorem \ref{thm:comA} without using permutations. 
\end{rmk}

\section{Proof of Theorem \ref{thm:FUPC}}\label{sec:FUPC}
In this section, we prove the FUP for discrete Cantor sets with random alphabets in Theorem \ref{thm:FUPC}. Recall the notations: Let $M,A\in\N$ with $M\ge3$ and $1<A<M$ so $\delta=\log A/\log M\in(0,1)$. Equip the space of alphabets $\Ab(M,A)=\{\Ac\subset\{0,...,M-1\}:\Card(\Ac)=A\}$ with the uniform counting probability measure $\mu$ as in \eqref{eq:prob}. 

By the FUP for discrete Cantor sets $\Cc_k=\Cc_k(M,\Ac)$ as in \eqref{eq:Cantor} with random alphabets, we estimate
$$r_k=\|\Char_{\Cc_k}\Fc_N\Char_{\Cc_k}\|_{l^2_N\to l^2_N},$$
in which the alphabet $\Ac$ is chosen randomly from $\Ab(M,A)$ with respect to $\mu$. The starting point of the above estimate is the crucial submultiplicativity property proved by Dyatlov-Jin \cite[Section 3.1]{DJ1}: 
\begin{prop}\label{prop:subm}
For all $k_1,k_2\in\N$,
\begin{equation}\label{eq:sub}
r_{k_1+k_2}\le r_{k_1}r_{k_2}.
\end{equation}
\end{prop}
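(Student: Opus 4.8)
The plan is to exploit the self-similar structure of the discrete Cantor sets. For $N=M^{k_1+k_2}$, every index $n\in\{0,\dots,N-1\}$ has a unique decomposition $n=n_1+M^{k_1}n_2$ with $n_1\in\{0,\dots,M^{k_1}-1\}$ and $n_2\in\{0,\dots,M^{k_2}-1\}$, and reading off base-$M$ digits shows that $n\in\Cc_{k_1+k_2}(M,\Ac)$ if and only if $n_1\in\Cc_{k_1}(M,\Ac)$ and $n_2\in\Cc_{k_2}(M,\Ac)$; I would likewise write the output frequency as $m=m_2+M^{k_2}m_1$, for which $m\in\Cc_{k_1+k_2}$ iff $m_2\in\Cc_{k_2}$ and $m_1\in\Cc_{k_1}$. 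This identifies $l^2_N$ with $l^2_{M^{k_1}}\otimes l^2_{M^{k_2}}$ and $\Char_{\Cc_{k_1+k_2}}$ with the corresponding tensor-product projection.

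First I would expand $\Fc_N$ in these coordinates. Computing $mn/N\bmod1$ with $N=M^{k_1+k_2}$, the term $M^{k_1+k_2}m_1n_2$ is an integer and drops out, leaving a Cooley--Tukey-type identity: for $u$ supported on $\Cc_{k_1+k_2}$ and $m=m_2+M^{k_2}m_1$,
$$\Fc_Nu(m)=\frac1{\sqrt{M^{k_1}}}\sum_{n_1}e^{-\frac{2\pi i m_1 n_1}{M^{k_1}}}e^{-\frac{2\pi i m_2 n_1}{N}}\left(\frac1{\sqrt{M^{k_2}}}\sum_{n_2}e^{-\frac{2\pi i m_2 n_2}{M^{k_2}}}u(n_1+M^{k_1}n_2)\right).$$
For each fixed $n_1$ the inner parenthesis, read as a function of $m_2$, is $\Char_{\Cc_{k_2}}\Fc_{M^{k_2}}\Char_{\Cc_{k_2}}$ applied to $n_2\mapsto u(n_1+M^{k_1}n_2)$ — call it $g_{n_1}(m_2)$, which vanishes unless $n_1\in\Cc_{k_1}$. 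Then for each fixed $m_2\in\Cc_{k_2}$ the outer sum, read as a function of $m_1$, is $\Fc_{M^{k_1}}$ applied to the function $h_{m_2}(n_1):=e^{-2\pi i m_2 n_1/N}g_{n_1}(m_2)$, which is supported on $\Cc_{k_1}$.

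The estimate is then assembled by applying the definition of $r_k$ twice. Fix $m_2\in\Cc_{k_2}$ and sum over $m_1\in\Cc_{k_1}$: since $h_{m_2}$ is supported on $\Cc_{k_1}$, the sum equals $\|\Char_{\Cc_{k_1}}\Fc_{M^{k_1}}\Char_{\Cc_{k_1}}h_{m_2}\|^2\le r_{k_1}^2\|h_{m_2}\|^2$, and because the twiddle phase $e^{-2\pi i m_2 n_1/N}$ is unimodular, $\|h_{m_2}\|^2=\sum_{n_1}|g_{n_1}(m_2)|^2$. Summing over $m_2\in\Cc_{k_2}$ and applying the definition of $r_{k_2}$ to each $g_{n_1}$ gives $\sum_{m_2\in\Cc_{k_2}}\sum_{n_1}|g_{n_1}(m_2)|^2\le r_{k_2}^2\sum_{n_1}\|u(n_1+M^{k_1}\cdot)\|^2=r_{k_2}^2\|u\|^2$, hence $\|\Char_{\Cc_{k_1+k_2}}\Fc_Nu\|^2\le r_{k_1}^2r_{k_2}^2\|u\|^2$ for every $u$ supported on $\Cc_{k_1+k_2}$, which yields \eqref{eq:sub} after taking the supremum over such $u$. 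I expect the only delicate point to be the twiddle phase $e^{-2\pi i m_2 n_1/N}$: it obstructs the naive factorization $\Fc_N=\Fc_{M^{k_1}}\otimes\Fc_{M^{k_2}}$, but since it has modulus $1$ it leaves all $l^2$-norms unchanged and can be absorbed harmlessly into $h_{m_2}$. Everything else is bookkeeping with the two index decompositions and the matching of Cantor-set membership.
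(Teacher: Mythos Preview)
Your argument is correct. The paper does not actually supply a proof of this proposition: it simply quotes the submultiplicativity property from Dyatlov--Jin \cite[Section~3.1]{DJ1}. Your Cooley--Tukey decomposition of $\Fc_N$ with the mixed index conventions $n=n_1+M^{k_1}n_2$ and $m=m_2+M^{k_2}m_1$, followed by two successive applications of the operator norms $r_{k_1}$ and $r_{k_2}$ with the unimodular twiddle factor absorbed harmlessly, is precisely the standard route to this inequality and matches the approach in \cite{DJ1}.
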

Hence, $r_k\le r_1^k$ and it suffices to establish the bound of $r_1=r_1(\Ac)$. In this case, $\Cc_1(M,\Ac)=\Ac$ and the mapping $\Char_\Ac\Fc_M\Char_\Ac:l^2_M\to l^2_M$ is given by a symmetric $M\times M$ matrix. Denote this matrix by $\Mcc(\Ac)$ and its rows by $R_j(\Ac)$, $j=0,...,M-1$. 

Let $\Mcc^\star$ be the complex conjugate of $\Mcc$. We estimate the largest singular value $r_1(\Ac)$ of $\Mcc(\Ac)$ via the analysis of $\Mcc(\Ac)\Mcc(\Ac)^\star$, which has entries 
\begin{eqnarray*}
F_{jk}(\Ac)&:=&R_j(\Ac)\ol{R_k(\Ac)}\\
&=&\frac{\Char_\Ac(j)\Char_\Ac(k)}{M}\sum_{l\in A}e^{\frac{2\pi i(k-j)l}{M}}\\
&=&\begin{cases}
\frac{A}{M}, & \text{if }j=k\in\Ac,\\
\frac{1}{M}\sum_{l\in\Ac}e^{\frac{2\pi i(k-j)l}{M}}, & \text{if }j,k\in\Ac\text{ and }j\ne k,\\
0, & \text{if }j\not\in\Ac\text{ or }k\not\in\Ac,
\end{cases}
\end{eqnarray*} 
in the view of the discrete Fourier transform \eqref{eq:FN}. For the exponential sum appeared above, we have that
\begin{prop}\label{prop:exp}
Suppose that $L>0$ and $m\in\Z\setminus\{0\}$. Then there exists $\Gb_{L,m}\subset\Ab(M,A)$ with
$$\mu\big(\Ab(M,A)\setminus\Gb_{L,m}\big)\le2e^{-\frac{L^2}{16}}$$
such that for all $\Ac\in\Gb_{L,m}$, 
$$\left|\sum_{l\in\Ac}e^{\frac{2\pi iml}{M}}\right|\le L\sqrt A.$$
\end{prop}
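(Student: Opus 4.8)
The plan is to realize the exponential sum as a single complex-valued function on the space of alphabets and feed it into the concentration of measure theorem already established in Section~\ref{sec:com}. Concretely, fix $m\in\Z\setminus\{0\}$ and define
$$F(\Ac)=\sum_{l\in\Ac}e^{\frac{2\pi iml}{M}}\qquad\text{for }\Ac\in\Ab(M,A),$$
so that the asserted bound is precisely $|F(\Ac)|\le L\sqrt A$. To apply Theorem~\ref{thm:comA} I need two ingredients: a bound on the Lipschitz norm of $F$ with respect to the metric $d$ of \eqref{eq:dist}, and the value of the expectation $\Eb(F)$.

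First I would estimate $\|F\|_\Lip$. Given $\Ac_1\ne\Ac_2$ in $\Ab(M,A)$, both of cardinality $A$, the terms common to the two alphabets cancel, leaving
$$F(\Ac_1)-F(\Ac_2)=\sum_{l\in\Ac_1\setminus\Ac_2}e^{\frac{2\pi iml}{M}}-\sum_{l\in\Ac_2\setminus\Ac_1}e^{\frac{2\pi iml}{M}}.$$
Since each summand has modulus $1$, the triangle inequality gives $|F(\Ac_1)-F(\Ac_2)|\le\Card(\Ac_1\setminus\Ac_2)+\Card(\Ac_2\setminus\Ac_1)=d(\Ac_1,\Ac_2)$, hence $\|F\|_\Lip\le1$. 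It matters that the constant here is exactly $1$: this is what makes the exponent in the conclusion come out cleanly as $L^2/16$.

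Next I would compute the expectation. By symmetry every digit $l\in\{0,\dots,M-1\}$ belongs to exactly $\binom{M-1}{A-1}$ of the $\binom{M}{A}$ alphabets, so
$$\Eb(F)=\frac{\binom{M-1}{A-1}}{\binom{M}{A}}\sum_{l=0}^{M-1}e^{\frac{2\pi iml}{M}}=\frac{A}{M}\sum_{l=0}^{M-1}e^{\frac{2\pi iml}{M}}.$$
The inner sum is a geometric series which vanishes whenever $M\nmid m$, and this is exactly the regime in which Proposition~\ref{prop:exp} is used: in Section~\ref{sec:FUPC} one has $m=k-j$ with $k,j\in\{0,\dots,M-1\}$ distinct, so $0<|m|<M$. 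Thus $\Eb(F)=0$.

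Finally I would invoke Theorem~\ref{thm:comA} with $t=L\sqrt A$: combining $\|F\|_\Lip\le1$ and $\Eb(F)=0$,
$$\mu\Big(\big\{\Ac\in\Ab(M,A):|F(\Ac)|\ge L\sqrt A\big\}\Big)\le2\exp\left(-\frac{(L\sqrt A)^2}{16A}\right)=2e^{-\frac{L^2}{16}},$$
and setting $\Gb_{L,m}:=\{\Ac\in\Ab(M,A):|F(\Ac)|\le L\sqrt A\}$ yields the proposition. I do not expect a genuine obstacle here: essentially all of the content has already been absorbed into the concentration inequality of Section~\ref{sec:com}, and what remains is the elementary Lipschitz bound (where the only subtlety is keeping the constant equal to $1$ rather than $2$, so that the $16A$ denominator in Theorem~\ref{thm:comA} produces $L^2/16$) together with the geometric-series evaluation of $\Eb(F)$ (where the only point to record is that $m$ is not a multiple of $M$).
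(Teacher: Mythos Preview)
Your proof is correct and follows exactly the same route as the paper: define $F$, compute $\Eb(F)=0$ via the symmetry/counting argument, bound the Lipschitz norm from the symmetric-difference metric, and apply Theorem~\ref{thm:comA} with $t=L\sqrt A$. Your constant $\|F\|_\Lip\le1$ is in fact what the paper's own displayed inequality shows (the paper then writes $\le2$, but the next line is consistent with $1$), and you are right to flag that $\Eb(F)=0$ needs $M\nmid m$, which is satisfied in the only place the proposition is invoked since there $0<|m|<M$.
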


That is, outside a set of alphabets comprising exponentially small measure, the exponential sums exhibits ``square-root cancellation''.
\begin{proof}[Proof of Proposition \ref{prop:exp}]
Define the function $F:\Ab(M,A)\to\C$ by
$$F(\Ac)=\sum_{j\in\Ac}e^{\frac{2\pi imj}{M}}\quad\text{for }\Ac\in\Ab(M,A).$$
The expectation $\Eb(F)$ is straightforward:
\begin{eqnarray*}
\Eb(F)&=&\frac{1}{\Card(\Ab(M,A))}\sum_{\Ac\in\Ab(M,A)}\sum_{j\in\Ac}e^{\frac{2\pi imj}{M}}\\
&=&{M\choose A}^{-1}{M-1\choose A-1}\sum_{j=0}^{M-1}e^{\frac{2\pi imj}{M}}\\
&=&0.
\end{eqnarray*}
Here, we first interchange the sums and use the fact that for any fixed $j=0,...,M-1$, the number of combinations $\Ac$ which contains $j$ is ${M-1\choose A-1}$. Observe that for $\Ac_1,\Ac_2\in\Ab(M,A)$, 
$$\left|F(\Ac_1)-F(\Ac_2)\right|=\left|\sum_{j\in\Ac_1\setminus\Ac_2}e^{\frac{2\pi imj}{M}}-\sum_{j\in\Ac_2\setminus\Ac_1}e^{\frac{2\pi imj}{M}}\right|\le\Card(\Ac_1\triangle\Ac_2),$$
in the view of the metric \eqref{eq:dist}. Hence, $\|F\|_\Lip\le2$ and Theorem \ref{thm:comA} implies that
$$\mu\Big(\left\{\Ac\in\Ab(M,A):|F(\Ac)|\ge t\right\}\Big)\le2e^{-\frac{t^2}{16A}},$$
for any $t>0$. Taking $t=L\sqrt A$, we have the corollary.
\end{proof}

With these preparation, we prove Theorem \ref{thm:FUPC}:
\begin{proof}[Proof of Theorem \ref{thm:FUPC}]
Let $L>2$ be chosen later. By Proposition \ref{prop:exp}, for each $m\in\{-M+1,...,-1,1,...,M-1\}$, there exists $\Gb_{L,m}\subset\Ab(M,A)$ with
$$\mu\big(\Ab(M,A)\setminus\Gb_{L,m}\big)\le2e^{-\frac{L^2}{64}}$$
such that for all $\Ac\in\Gb_{L,m}$, 
$$\left|\sum_{l\in\Ac}e^{\frac{2\pi iml}{M}}\right|\le L\sqrt A.$$
Write
$$\Gb_{L,M}=\bigcap_{m=-M+1,...,-1,1,...,M-1}\Gb_{L,m}.$$
Then
\begin{equation}\label{eq:GL}
\mu\big(\Ab(M,A)\setminus\Gb_L\big)\le\sum_{m=-M+1,...,-1,1,...,M-1}\mu\big(\Ab(M,A)\setminus\Gb_{L,m}\big)\le4Me^{-\frac{L^2}{64}},
\end{equation}
moreover, for all $\Ac\in\Gb_L$ and all $m=-M+1,...,-1,1,...,M-1$, 
$$\left|\sum_{l\in\Ac}e^{\frac{2\pi iml}{M}}\right|\le L\sqrt A.$$
Pick any $\Ac\in\Gb_L$ and let $j\in\Ac$. Notice that $\{k-j:k=0,...,M-1\text{ and }k\ne j\}\subset\{-M+1,...,-1,1,...,M-1\}$ and contains $A-1$ numbers. Hence,
\begin{eqnarray*}
\sum_{k\in\Ac,k\ne j}\left|F_{jk}(\Ac)\right|&=&\frac{1}{M}\sum_{k\in\Ac,k\ne j}\left|\sum_{l\in\Ac}e^{\frac{2\pi i(k-j)l}{M}}\right|\le\frac{(A-1)L\sqrt A}{M}\le\frac{LA^\frac32}{M}.
\end{eqnarray*}
Since the diagonal terms $|F_{jj}|=A/M$, Schur's lemma (\cite[Lemma 3.8]{DJ1} and also \cite[Equation (4.11)]{Dy}) implies an upper bound of the eigenvalues of $\Mcc(\Ac)\Mcc(\Ac)^\star$:
$$\max_{j\in\Ac}\left\{\left|F_{jj}\right|+\sum_{k\in\Ac,k\ne j}\left|F_{jk}(\Ac)\right|\right\}\le\frac AM+\frac{LA^\frac32}{M}\le\frac{2LA^\frac32}{M}.$$
Since $A=M^\delta$, the largest eigenvalue of $\Mcc(\Ac)$ satisfies that
$$r_1(\Ac)\le\sqrt{\frac{2LA^\frac32}{M}}=\sqrt{2L}\cdot M^{-\left(\frac12-\frac34\delta\right)},$$
if $\delta<2/3$. For $\ve>0$, set $\sqrt{2L}=M^\ve$. Then $L=M^{2\ve}/2$ and
\begin{eqnarray*}
\beta^s(M,\Ac)&=&-\frac{\log r_1(\Ac)}{\log M}\\
&=&-\frac{\log\left[M^\ve\cdot M^{-\left(\frac12-\frac34\delta\right)}\right]}{\log M}\\
&\ge&\frac12-\frac34\delta-\ve,
\end{eqnarray*}
for all $\Ac\in\Gb_L$ (depending on $M$ and $\ve$) with
$$\mu\big(\Ab(M,A)\setminus\Gb_L\big)\le4Me^{-\frac{M^{4\ve}}{64}},$$
in the view of \eqref{eq:GL}.
\end{proof}

\section{Future investigations}
For $M,A\in\N$ with $M\ge3$ and $1<A<M$, still use $\Ab(M,A)$ as the space of alphabets with cardinality $A$. In this paper, we consider the discrete Cantor sets with \textit{random alphabets}, i.e.,
\begin{enumerate}[(i).]
\item[(i).] $$\Cc_k(M,\Ac)=\left\{\sum_{j=0}^{k-1}a_jM^j:a_0,...,a_{k-1}\in\Ac\right\}.$$
\end{enumerate}
In this randomization process, one chooses an alphabet $\Ac$ randomly from $\Ab(M,A)$ (with respect to the uniform counting probability measure $\mu$); then the Cantor sets are constructed in each step $j\in\N$ using the same alphabet $\Ac$ of digits. The resulting Cantor sets $\Cc_\infty$ \eqref{eq:Cinfty} always have dimension $\delta=\log A/\log M$. 

The concentration of measure phenomenon in Theorem \ref{thm:comA} emerges as $M\to\infty$ (and $A=M^\delta\to\infty$ as well when $\delta>0$). It is then responsible for the estimates of the exponent in the FUP for $\Cc_k(M,\Ac)$ with random alphabets (Theorems \ref{thm:FUPC} and \ref{thm:Eb}).

The randomization process in (i) is designed to rigorously verify the observation in Dyatlov-Jin \cite{DJ1} that the FUP with better exponent holds on average than the worst case, with respect to different alphabets of same cardinality. It is different from and should be compared with the \textit{random Cantor sets}, which have been extensively studied in the literature, see Falconer \cite[Section 15.1]{F}. For example, consider the following random ensemble.
\begin{enumerate}[(i).]
\item[(ii).] $$\Cc_k(M,\Ac_0,...,\Ac_{k-1})=\left\{\sum_{j=0}^{k-1}a_jM^j:a_0\in\Ac_0,a_1\in\Ac_1,...,a_{k-1}\in\Ac_{k-1}\right\},$$
in which $\Ac_j\in\Ab(M,A)$ for $j=0,...,k-1$ are identical and independent random variables. That is, the Cantor sets are constructed in each step $j\in\N$ using the random alphabet $\Ac_j$ of digits. 
\end{enumerate}
There are also other random models, see, for instance, Eswarathasan-Pramanik \cite[Section 8]{EP}. 

The randomization process (ii) differs with (i) significantly: Firstly, the random Cantor sets do not correspond to open quantum maps in Theorems \ref{thm:gapDJ} and Corollary \ref{cor:gap}, rather, the FUP associated with random Cantor sets are closely related to the random matrix theory\footnote{In fact, one can also interpret the $l^2_M\to l^2_M$ mapping norm estimate of $\Char_\Ac\Fc_M\Char_\Ac$ treated in Section \ref{sec:FUPC} from the random matrix theory point of view. However, the corresponding matrices are different from the typical random ensembles and are not discussed in the field to the authors' knowledge.}, particularly, the norm estimate of large random matrices, see Tao \cite[Section 2.3]{Tao}; secondly, the concentration of measure phenomenon in the space of random Cantor sets arises for fixed $M,A$ and as $k\to\infty$.

We shall leave the FUP in various random settings and applications for future investigations.

\section*{Acknowledgments}
We would like to thank Jie Ma for our discussions on the combinatorics involved in this paper as well as Semyon Dyatlov and Long Jin for their  comments and permission to use their numerical data from \cite{DJ1}. SE was supported by the NSERC Discovery Grant program during the writing of this article.

\end{document}